\newcommand{\cc}{\mathbb{C}}
\newcommand{\pn}{{\mathbb{P}}^{n-1}}
\newcommand{{\rr}}{\mathcal{R}}
\newcommand{\sfc}{\sigma_{f}^{c}}
\newtheorem{thm}{Theorem}[section]
\newtheorem{defn}[thm]{Definition}
\newtheorem{lem}[thm]{Lemma}
\newtheorem{corr}[thm]{Corollary}
\newtheorem{prop}[thm]{Proposition}
\newtheorem{exam}[thm]{Example}
\begin{document}

 \title{Projective spectrum and cyclic cohomology}

\author[P. Cade]{Patrick Cade}
\address{Department of Mathematics, Siena College, Loudonville, NY 12211, U.S.A.}
\email{pcade@siena.edu}

\author[R. Yang]{Rongwei Yang }
\address{Department of Mathematics and Statistics, SUNY at Albany, Albany, NY 12047, U.S.A.}
\email{ryang@albany.edu}

\subjclass{Primary 47A13; Secondary 55N20}

\keywords{cyclic cohomology, invariant multilinear functional, Maurer-Cartan form, maximal ideal space, projective spectrum, projective resolvent set, de Rham cohomology, union of hyperplanes}

\begin{abstract}
For a tuple $A=(A_1,\ A_2,\ ...,\ A_n)$ of elements in a unital algebra ${\mathcal B}$ over $\cc$, its {\em projective spectrum} $P(A)$ or $p(A)$
is the collection of $z\in \cc^n$, or respectively $z\in \pn$ such that the multi-parameter pencil $A(z)=z_1A_1+z_2A_2+\cdots +z_nA_n$ is not invertible in ${\mathcal B}$. 
${\mathcal B}$-valued $1$-form $A^{-1}(z)dA(z)$ contains much topological information about $P^c(A):=\cc^n\setminus P(A)$. In commutative cases, invariant multi-linear functionals are effective tools to extract that information. This paper shows that in non-commutative cases, the cyclic cohomology of ${\mathcal B}$ does a similar job. In fact, a Chen-Weil type map $\kappa$ from the cyclic cohomology of ${\mathcal B}$ to the de Rham cohomology $H^*_d(P^c(A),\ \cc)$ is established. As an example, we prove a closed high-order form of the classical Jacobi's formula.
\end{abstract}
\maketitle

\section{Introduction}

Multivariable operator theory at its current stage has focused on study of commuting operator tuples. A center piece of this study is the Taylor spectrum defined in [Ta].
Study of noncommuting tuples, on the other hand, were carried out mostly by scholars in the field of Differential Equations and Mathematical Physics. In recent years, however, functional analysts are getting more and more interested in noncommuting tuples, and many feel a great need to bring noncommuting tuples into the domain of multivariable operator theory. In fact, very promising work has been done by several researchers, for instance, Ball and Bolotnikov [BB], Greene, Helton and Vinnikov [GHV],
Helton, Klep and McCullough [HKM], and a great amount of work by Popescu (for example [Po]).
Whether there is a good definition of joint spectrum for noncommuting tuples now becomes an important question. 

For a tuple $A=(A_1,\ A_2,\ ...,\ A_n)$ of elements in a unital algebra ${\mathcal B}$ over $\cc$, the linear sum $A(z)=z_1A_1+z_2A_2+\cdots +z_nA_n$ is called a multiparameter pencil for $A$. First studies of $A(z)$ when $A$ is a tuple of linear operators were made by H. Weyl and physicist R. Feynman in search of a functional calculus for noncommuting operator tuples. This line of research continues to this date (for example, Anderson and S\"{o}strand [AS], F. Colombo, G. Gentili, I. Sabadini and D. Struppa [CGSS], Jefferies [Je]).
Invertibility of $A(z)$ is studied in partial differential equations ( Atkinson [At], Sleeman [Sl]), and in simulation of electronic circuits and in fluid mechanics (Hochstenbach [Ho]). Some pure mathematical studies were carried out as well, for example see Vinnikov ([Vi]). For practical reasons, attention in the past was focused on the case when $A$ is a tuple of self-adjoint operators. But theorists are not confined by practical needs. Inspired by these works, the second author proposed the so-called projective spectrum in [Ya]. To be precise, the {\em projective spectrum} $P(A)$ or $p(A)$ for a general tuple $A$ is the collection of $z\in \cc^n$, or respectively $z\in \pn$ such that $A(z)=z_1A_1+z_2A_2+\cdots +z_nA_n$ is not invertible in ${\mathcal B}$. The {\em projective resolvent sets} refer to
their complements $P^c(A)={\cc}^n\setminus P(A)$ and $p^c(A)=\pn \setminus p(A)$. It is shown in [Ya] that for every tuple $A$, $p(A)$ is a nontrivial compact subset of $\pn$. Quite a few examples were given there. Here we just mention two of them.

\begin{exam}
 Let $A_1$ be any element in ${\mathcal B}$, and $A_2=-I$. Then for the tuple $A=(A_1,\ A_2)$,
$A(z)=z_1A_1-z_2I$. Clearly, if $[z_1,\ z_2]$ is in $p(A)$
then $z_1\neq 0$, and $p(A)$, under the affine coordinate $z_2/z_1$, is the classical spectrum $\sigma(A_1)$. 
\end{exam}

\begin{exam}
When ${\mathcal B}$ is the matrix algebra $M_k(\cc)$, $A=(A_1,\ A_2,\ ...,\ A_n)$ is a tuple of $k\times k$ matrices.
Then $A(z)$ is invertible if and only if $detA(z)\neq 0$. Since $detA(z)$ is homogeneous of degree $k$,
\[p(A)=\{z=[z_1,\ ,z_2,\ ...,\ z_n]\in \pn:\ detA(z)=0\},\]
which is a projective hypersurface of degree $k$. $p^c(A)$ in this case is a hypersurface complement.
\end{exam}

Of course, one shall not expect $p(A)$ to be a hypersurface if ${\mathcal B}$ is an infinite dimensional algebra. Nevertheless, the resolvent set $P^c(A)$
is still manageable. For instance, it is shown in [Ya] that if ${\mathcal B}$ is a $C^*$-algebra then every path-connected component of $P^c(A)$ is a domain of holomorphy.
The Maurer-Cartan type ${\mathcal B}$-valued $1$-form $\omega_A(z):=A^{-1}(z)dA(z)$ contains much topological information about $P^c(A)$. Here, as usual, $d=\partial +\overline{\partial}$. Since this paper only concerns with holomorphic functions, $\overline{\partial}$ does not play any role here. In commutative cases, invariant multi-linear functionals are effective tools to extract that information. This paper shows that in non-commutative cases, the cyclic cohomology of ${\mathcal B}$ does a similar job. In fact, a Chen-Weil type map $\kappa$ from the cyclic cohomology of ${\mathcal B}$ to the de Rham cohomology $H^*_d(P^c(A),\ \cc)$ is established. The classical Jacobi's formula states that for a differentiable square-matrix-valued function $f(z)$,
\[tr(f^{-1}(z)df(z))=d{\text{log}} detf(z)\]
on the set of $z$ where $f(z)$ is invertible. As an example, we prove a closed high-order form of this formula. \\

{\bf Acknowledgment}: A large part of this paper comes from the first author's dissertation at SUNY at Albany. The authors thank Professor M. Varisco for many helpful discussions on the topic of cyclic cohomology. 

\section{Preliminaries}

This section recalls a few notions and results from [Ya]. We will review what has been done in the commutative case.
It serves as a prelude to what we will develop for the non-commutative case.
Throughout the paper, for a domain $D\subset \cc^n$, we let 
$H^{*}_d(D,\cc)$ denote the de Rham cohomology of $D$.
It should be noted that if $D$ is a domain of holomorphy, then $H^p_d(D,\ \cc)$ is spanned by holomorphic forms for each $p$ (cf. [Ra]).  

First, one observes that for a linear functional $\phi$ on ${\mathcal B}$, $\phi(\omega_A(z))=\sum_{j=1}^n\phi(A^{-1}(z)A_j)dz_j$ 
is a holomorphic $1$-form on $P^c(A)$. Likewise, for a $k$-linear functional $F$,
$F(\omega_A(z),\ \omega_A(z),\ ...,\ \omega_A(z))$ is a holomorphic $k$-form on $P^c(A)$.

A $k$-linear functional $F$ on ${\mathcal B}$ is said to be invariant if 
\begin{equation*}
F(a_1,\ a_2,\ ,\ ...,\ a_k)=F(ga_1g^{-1},\ ga_2g^{-1},\ ...,\ ga_kg^{-1}) \tag{1.1}
\end{equation*}
for all $a_1,\ a_2,\ ,\ ...,\ a_k$ in ${\mathcal B}$ and every invertible $g$ in ${\mathcal B}$. An invariant $1$-linear functional is usually called a trace. If $F_1$ and $F_2$ are invariant $k$-linear, and respectively, $m$-linear functionals, then an associative product $F_1\times F_2$ can be defined by
\[(F_1\times F_2)(a_1,\ a_2,\ ...,\ a_{k+m})=F_1(a_1,\ a_2,\ ...,\ a_k)F_2(a_{k+1},\ a_{k+2},\ ,\ ...,\ a_{k+m}).\]
Clearly, $F_1\times F_2$ is an invariant $(k+m)$-linear functional. We let ${\mathcal F}^0=\cc$, and ${\mathcal F}^k$ be the space of 
invariant $k$-linear functionals on ${\mathcal B}$, $k\geq 1$, and set 
\[{\mathcal F}^*({\mathcal B})=\oplus_{k=0}^{\infty}{\mathcal F}^k({\mathcal B}).\]
Then ${\mathcal F}^*$ is a graded algebra over $\cc$ with respect to above-defined product. Now consider the map $\tau$ from ${\mathcal F}^*$ to holomorphic forms on $P^c(A)$ defined by
\[\tau(1)=1,\ {\text and}\ \tau (F)=F(\omega_A(z),\ \omega_A(z),\ ...,\ \omega_A(z)).\]
\begin{prop}
$\tau$ a homomorphism from ${\mathcal F}^*({\mathcal B})$ into $H^*_d(P^c(A),\ \cc)$.
\end{prop}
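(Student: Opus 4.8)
The plan is to split the assertion into two parts: that every form $\tau(F)$ is closed, so that $[\tau(F)]$ makes sense in $H^*_d(P^c(A),\cc)$, and that $\tau$ carries the product $\times$ to the wedge product of forms, which induces the cup product on $H^*_d$. The second part is formal. Writing $B_p=A^{-1}(z)A_p$ so that $\omega_A=\sum_p B_p\,dz_p$, expansion in the basis $dz_{p_1}\wedge\cdots$ gives
\[
\tau(F_1\times F_2)=\sum F_1(B_{p_1},\dots,B_{p_k})F_2(B_{p_{k+1}},\dots,B_{p_{k+m}})\,dz_{p_1}\wedge\cdots\wedge dz_{p_{k+m}}=\tau(F_1)\wedge\tau(F_2),
\]
the scalar coefficients passing freely through the forms; together with $\tau(1)=1$ and the evident linearity of $\tau$ on each $\mathcal{F}^k$, this yields the homomorphism property once closedness is known.

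For closedness I would first record the Maurer--Cartan identity. Differentiating $A(z)A^{-1}(z)=I$ yields $d(A^{-1})=-A^{-1}(dA)A^{-1}$, and since $d(dA)=0$,
\[
d\omega_A=d(A^{-1})\wedge dA=-\omega_A\wedge\omega_A.
\]
As $d$ is a graded derivation and each argument of $\tau(F)=F(\omega_A,\dots,\omega_A)$ is a $1$-form,
\[
d\tau(F)=\sum_{i=1}^{k}(-1)^{i-1}F(\omega_A,\dots,d\omega_A,\dots,\omega_A)=-\sum_{i=1}^{k}(-1)^{i-1}F(\omega_A,\dots,\omega_A\wedge\omega_A,\dots,\omega_A),
\]
the modified entry being the $i$-th.

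To kill the right-hand side I would pass to the infinitesimal form of (1.1). Taking $g=I+tb$, invertible for small $t$, and reading off the coefficient of $t$ in $F(ga_1g^{-1},\dots,ga_kg^{-1})=F(a_1,\dots,a_k)$ gives
\[
\sum_{j=1}^{k}F(a_1,\dots,[b,a_j],\dots,a_k)=0,\qquad [b,a]=ba-ab,
\]
for all $b,a_1,\dots,a_k$ in $\mathcal{B}$. For $k=1$ this says $F$ is a trace, and since $\omega_A\wedge\omega_A=\sum_{p<q}[B_p,B_q]\,dz_p\wedge dz_q$ we get $d\tau(F)=-\sum_{p<q}F([B_p,B_q])\,dz_p\wedge dz_q=0$. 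In $\omega_A\wedge\omega_A=\sum_{r,s}B_rB_s\,dz_r\wedge dz_s$ the symmetric part of $B_rB_s$ dies against $dz_r\wedge dz_s$, so each inserted entry effectively carries $\tfrac12[B_r,B_s]$; for $k=2$ the infinitesimal invariance then produces the pointwise identity $F([B_r,B_s],B_t)=F(B_r,[B_s,B_t])$, which makes the two terms of $d\tau(F)$ cancel.

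In general, once each inserted entry is replaced by the corresponding commutator, the alternating sum over the insertion position is exactly the Chevalley--Eilenberg coboundary of $F$ evaluated on the factors $\omega_A$, and the displayed infinitesimal invariance is precisely the statement that an invariant $F$ is a cocycle; hence the sum vanishes and $d\tau(F)=0$. I expect the main obstacle to be the sign bookkeeping in this last step: one must transport one factor of each inserted $\omega_A\wedge\omega_A$ past the $i-1$ preceding $1$-forms and check that, after this Koszul reordering, the resulting scalar identities match the infinitesimal invariance relation term by term and cancel across the $k$ insertion positions. A clean conceptual check I would keep in reserve is that $\tau(F)$ is the pullback under $z\mapsto A(z)$ of the form $F(\theta,\dots,\theta)$ attached to the Maurer--Cartan form $\theta=g^{-1}dg$ on the group of invertibles of $\mathcal{B}$; condition (1.1) makes that form bi-invariant, hence closed, and closedness descends to $\tau(F)$ by naturality of $d$.
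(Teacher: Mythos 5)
Your proof is correct, but the comparison here is necessarily indirect: the paper states Proposition 1.1 without proof (it is recalled from [Ya]), so the only in-paper analogue is the cyclic-cohomology computation of Section 2. You share with Lemma 2.7 the two structural ingredients --- the Maurer--Cartan identity $d\omega_A=-\omega_A\wedge\omega_A$ and the graded Leibniz expansion $dF(\omega_A,\dots,\omega_A)=\sum_i(-1)^{i-1}F(\omega_A,\dots,d\omega_A,\dots,\omega_A)$ --- but you kill the resulting alternating sum by a different mechanism. The paper identifies that sum, after explicit permutation-group bookkeeping, with a multiple of $\kappa(b\phi)$ and then invokes the cyclic cocycle condition $b\phi=0$; you instead differentiate the invariance condition (1.1) along $g=I+tb$ to obtain $\sum_j F(a_1,\dots,[b,a_j],\dots,a_k)=0$ and run the classical Chern--Weil cancellation. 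That is the right hypothesis to use for Proposition 1.1 (invariance, not cyclicity, is what is assumed there), and your closing identification of $\tau(F)$ with the pullback of the bi-invariant form $F(\theta,\dots,\theta)$ under $z\mapsto A(z)$ is the correct conceptual summary. Two points to tighten: first, differentiating $F((I+tb)a_j(I+tb)^{-1},\dots)$ at $t=0$ needs $I+tb$ invertible for small $t$ and $F$ continuous, so the argument really lives in a Banach (or suitable topological) algebra with continuous $F$, which is the paper's implicit setting. Second, the ``sign bookkeeping'' you flag in the general step is settled cleanly by bilinearity: writing $\omega_A\wedge\omega_A=\tfrac12[\omega_A,\omega_A]$ and reducing to decomposable pieces $B_r\,dz_r$, the factor $(-1)^{i-1}$ needed to move the inserted $dz_r$ past the $i-1$ preceding $dz$'s exactly cancels the Leibniz sign, leaving $\bigl(\sum_i F(B_{p_1},\dots,[B_r,B_{p_i}],\dots,B_{p_k})\bigr)$ times a fixed $(k+1)$-form, which vanishes term by term by infinitesimal invariance. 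The multiplicativity half, $\tau(F_1\times F_2)=\tau(F_1)\wedge\tau(F_2)$, is as formal as you say and descends to the cup product on $H^*_d(P^c(A),\cc)$.
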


\begin{exam}
Let $A$ be a $n$-tuple of elements from a commutative Banach algebra ${\mathcal B}$. A bounded linear functional $\phi$ is said to be multiplicative if $\phi(ab)=\phi(a)\phi(b)$, $\forall a,\ b\in {\mathcal B}$. The collection of multiplicative linear functionals on ${\mathcal B}$ is often called the maximal ideal space of ${\mathcal B}$, and we denote it by $M({\mathcal B})$.
Then by Gelfand's theorem $A(z)$ is not invertible in ${\mathcal B}$ if and only if there exists a multiplicative linear functional $\phi$ such that 
\begin{equation*}
\phi(A(z))=\sum_{j=1}^{n}z_j\phi(A_j)=0.\tag{1.2}
\end{equation*}
For simplicity, we let $H_\phi=\{z\in {\cc}^n:\ \sum_{j=1}^{n}z_j\phi(A_j)=0\}.$ 
If $\phi$ is such that $\phi(A_j)=0$ for all $j$, then $P(A)={\cc}^n$. Otherwise $H_\phi$ is a hyperplane, and one sees that
\begin{equation*}
P(A)=\cup_{\phi\in M({\mathcal B})}H_{\phi}.\tag{1.3}
\end{equation*} 

In general $M({\mathcal B})$ is an infinite set. In the case when $P(A)$ is a union of a finite number 
of hyperplanes, $P(A)$ is also called a {\em central hyperplane arrangement} (cf. [OT]). One sees that every multiplicative linear 
functional $\phi$ is a trace, hence is in ${\mathcal F}^1$ with $\phi(I)\neq 0$. By Lemma 3.3 in [Ya],
$\phi(\omega_A(z))$ is a nontrivial element in $H^1_d(P^c(A),\ \cc)$. Furthermore, on $P^c(A)$, 
 \begin{align*}
\phi(\omega_A(z))&=\phi(A^{-1}(z))d\phi(A(z))\\
&=\frac{d\phi(A(z))}{\phi(A(z))}\\
&=\frac{d\sum_{j=1}^n z_j\phi(A_j)}{\sum_{j=1}^n z_j\phi(A_j)}.\\
\end{align*}
Here one recalls that $\sum_{j=1}^n z_j\phi(A_j)$ is the defining function for the hyperplane $H_{\phi}$.
If $\phi_1,\ \phi_2,\ ...,\ \phi_s$ are multiplicative linear functionals, and 
\[F(a_1,\ a_2,\ ...,\ a_s)=\prod_{j=1}^s\phi_j(a_j),\]
then it is easy to see that $F$ belongs in ${\mathcal F}^s$, and 
\[F(\omega_A(z),\ ...,\ \omega_A(z))=\prod_{j=1}^s\phi_j(\omega_A(z)).\]

It is a classical theorem by Arnold and Brieskon [OT] that when ${\mathcal B}$ is a commutative matrix algebra, $\tau:{\mathcal F}^* \longrightarrow H^*_d(P^c(A),\ \cc)$ is surjective. 
\end{exam}

Example 1.2 shows that in the commutative case, invariant multi-linear functionals are very effective tools in extracting topological information about $P^c(A)$ from $\omega_A$. Unfortunately, for many important non-commutative algebras, there are no or very few nontrivial invariant linear functionals.

\section{A cohomological map}

This section starts to look at the non-commutative case, with an aim to establish a map from
the cyclic cohomology of ${\mathcal B}$ to the de Rham cohomology of $P^c(A)$.
A good resource for the following material can be found in [Co].
Let $\mathcal{B}$ be a topological algebra over $\cc$ that is associative but not necessarily commutative nor unital.

\begin{defn}
The Hochschild complex of $\mathcal{B}$ is the cochain complex $C^p(\mathcal{B})$ of continuous $(p+1)$-linear functionals on $\mathcal{B}$ with respect to the coboundary map $b_{p-1}:\ C^{p-1}(\mathcal{B})\longrightarrow C^{p}(\mathcal{B})$ defined by $b_{-1}=0$ and for $p\geq 1$
\begin{align*}
&(b_p\phi)(\alpha_1,\alpha_2,\dots,\alpha_{p+1})\\
&=\sum_{j=1}^{p}(-1)^{j-1}\phi(\alpha_1,\dots,\alpha_j\alpha_{j+1}, \dots,\alpha_{p+1})+(-1)^{p}\phi(\alpha_{p+1}\alpha_1,\dots,\alpha_p).
\end{align*}
We will simply denote all the coboundary maps by $b$ when there is no danger of confusion. The $p$-th Hochschild cohomology space is,
\[HH^p({\mathcal A}): =ker(b_p)/im(b_{p-1}), \ p\geq 0.\]
\end{defn}

\begin{exam}
$HH^0({\mathcal A}) =\text{ker} (b : C^{0}(\mathcal{B}) \to C^{1}(\mathcal{B}))=\{ \text{traces on} \ {\mathcal B}\}.$
\end{exam} 

\begin{defn}
 A continuous Hochschild cochain $\phi \in C^p(\mathcal{B})$ is cyclic if 
$$\phi(a_0,a_1,\dots,a_{p})=(-1)^{p}\phi(a_{p},a_0,a_1,\dots,a_{p-1}). $$  We will denote the space of all continuous $p$-dim cyclic cochains on $\mathcal{B}$ as $C^{p}_{\lambda}(\mathcal{B})$
\end{defn}

An observation made by Connes is that if $\phi$ is a cyclic cochain then $b\phi$ is a cyclic cochain. Hence we can define a complex $C^{*}_{\lambda}$ with $b$ as the coboundary map.  

\begin{defn}
 The cyclic complex of $\mathcal{B}$ is the cochain complex of $C^*_{\lambda}(\mathcal{B})$ with respect to the coboundary map $b.$  The $pth$ cohomology space is denoted $HC^p(\mathcal{B})$ and called the $p$-th cyclic cohomology of $\mathcal{B}$. 
\end{defn}
Clearly, $HC^0({\mathcal A})=HH^0({\mathcal A})$.
  
In what follows, we will establish a map from $HC^p(\mathcal{B})$ to the de Rham cohomology $H^{p+1}_d(P^c(A),\ \cc)$. Since much of the work does not rely on the linearity of $A(z)$, we proceed in a more general setting. Let $f$ be an entire function on $\cc^n$ with values in a unital topological algebra $\mathcal{B}.$ Define 
\begin{equation*}
\sigma (f):=\{z\in \cc^n:\ f(z)\ \text{ is not invertible in ${\mathcal B}$}\}.
\end{equation*}%
$\sigma (f)$ can be equal to $\cc^n$ in some cases, but throughout this paper we assume the resolvent set $\sigma^{c}(f)$ is nonempty.   
Then $\omega_{f}(z):=f(z)^{-1}df(z)$ is well-defined and holomorphic on $\sigma^{c}(f).$  The identity
\begin{equation*}
 d\omega_{f}(z)=-\omega_{f}(z)\wedge \omega_{f}(z),\tag{2.1}
 \end{equation*}
is not difficult to check (cf. [BCY]), and it is the key to the upcoming work.

The goal here is to use the operator-valued $1$-form $\omega_{f}(z)$ to construct a map between the cyclic cohomology and the de Rham cohomology.  

\begin{defn}
Let $f$ be an entire function with values in a unital topological algebra $\mathcal{B}.$  Then we define the linear map $\kappa: C_{\lambda}^{p}(\mathcal{B}) \to \Omega^{p+1}(\sfc)$ by 
$$\kappa(\phi)=\phi(\omega_{f},\omega_f,\dots,\omega_f).$$ 
\end{defn}
Here, $\Omega^{q}(\sfc)$ stands for the collection of smooth $q$-forms on $\sfc$. We are looking to establish that the map $\kappa$ will descend to the cohomology level, i.e. $$\kappa: HC^{p}(\mathcal{B}) \to H_{d}^{p+1}(\sfc,\ \cc).$$ In fact, we will show that, up to a constant dependent on $p$, the diagram
$$
\begin{array}{ccc}
 C_{\lambda}^{p}(\mathcal{B}) & \stackrel{\kappa}{\longrightarrow} & \Omega^{p+1}(\sfc)\\
 \downarrow b & & \downarrow -d\\
  C_{\lambda}^{p+1}(\mathcal{B}) & \stackrel{\kappa}{\longrightarrow} & \Omega^{p+2}(\sfc),
\end{array}
$$
commutes. The proof is carried out in the following two lemmas.

First, it is worth noting that the space of holomorphic forms is generated by $dz_{i_1}\wedge dz_{i_2} \wedge \cdots \wedge dz_{i_r}$ for $1\leq r \leq n$.  Thus, if $f(z)$ is an entire function from $\mathbb{C}^{n}$ to $\mathcal{B}$ then we may write $\omega_f=\sum_{i=1}^{n}B_idz_i$, where 
$B_i(z)=f^{-1}(z)\frac{\partial f}{\partial z_i}$. In order to establish this map we make the following observation.
\begin{lem}
Let $f$ be an entire function with values in a unital topological algebra $\mathcal{B}.$  
If $\phi$ is a cyclic $p$-linear functional on  $\mathcal{B}$ then 
$$\kappa(b\phi)=-d(\kappa(\phi))-\phi(d\omega_f,\omega_f,\dots,\omega_f)$$
\end{lem}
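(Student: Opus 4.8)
The plan is to prove the identity by expanding both sides in the local frame $\omega_f=\sum_{i=1}^n B_i\,dz_i$ and comparing term by term, with the curvature identity (2.1) serving as the bridge. Throughout I interpret a $k$-linear functional on $\mathcal{B}$ as acting on $\mathcal{B}$-valued forms by $\phi(\eta_1,\dots,\eta_k)=\sum \phi(b^{(1)}_{I_1},\dots,b^{(k)}_{I_k})\,dz_{I_1}\wedge\cdots\wedge dz_{I_k}$ when $\eta_m=\sum_{I_m} b^{(m)}_{I_m}\,dz_{I_m}$, so that the algebra product of two merged arguments becomes a wedge product of forms; in particular $\omega_f\cdot\omega_f=\omega_f\wedge\omega_f$ as a $\mathcal{B}$-valued $2$-form.

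First I would compute the right-hand side. Since $\phi$ is a fixed multilinear functional and the coefficients $B_i(z)$ carry the only $z$-dependence, the Leibniz rule yields $d\kappa(\phi)=\sum_{k=1}^p(-1)^{k-1}\phi(\omega_f,\dots,d\omega_f,\dots,\omega_f)$ with $d\omega_f$ in the $k$-th slot, the sign $(-1)^{k-1}$ being the Koszul sign incurred when the differential produced by $d$ is carried past the $k-1$ preceding one-forms into the $k$-th position. Substituting (2.1) in the form $d\omega_f=-\omega_f\wedge\omega_f$ turns this into $-d\kappa(\phi)=\sum_{k=1}^p(-1)^{k-1}\phi(\omega_f,\dots,\omega_f\wedge\omega_f,\dots,\omega_f)$ with the merged $2$-form in slot $k$.

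Next I would expand the left-hand side $\kappa(b\phi)=(b\phi)(\omega_f,\dots,\omega_f)$ straight from the coboundary formula of Definition 2.1. The first $p$ summands $(-1)^{j-1}\phi(\dots,\alpha_j\alpha_{j+1},\dots)$ produce, upon substitution, exactly $\sum_{j=1}^p(-1)^{j-1}\phi(\omega_f,\dots,\omega_f\wedge\omega_f,\dots,\omega_f)$ with the product in slot $j$, which matches $-d\kappa(\phi)$ slot-for-slot and sign-for-sign. The only leftover piece is the wrap-around term $(-1)^p\phi(\alpha_{p+1}\alpha_1,\alpha_2,\dots,\alpha_p)$, and this is the delicate one: after expanding, the algebra factor is $B_{i_{p+1}}B_{i_1}$ while the attached differential is still $dz_{i_1}\wedge\cdots\wedge dz_{i_{p+1}}$, so to read it as $\phi(\omega_f\wedge\omega_f,\omega_f,\dots,\omega_f)$ one must move $dz_{i_{p+1}}$ to the front, costing a Koszul sign $(-1)^p$ that exactly cancels the $(-1)^p$ from the coboundary. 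The surviving term is therefore $\phi(\omega_f\wedge\omega_f,\omega_f,\dots,\omega_f)=-\phi(d\omega_f,\omega_f,\dots,\omega_f)$ by (2.1).

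Combining the two computations gives $\kappa(b\phi)=-d(\kappa(\phi))-\phi(d\omega_f,\omega_f,\dots,\omega_f)$. I expect the main obstacle to be precisely the sign bookkeeping in the wrap-around term: one must track the Koszul reordering of the differentials against the explicit $(-1)^p$ in the coboundary and confirm the cancellation, while also verifying that the first $p$ coboundary terms align exactly with the Leibniz expansion of $d\kappa(\phi)$. I would also flag that cyclicity of $\phi$ is not actually used for this identity, which holds for any Hochschild cochain; cyclicity enters only to guarantee that $b\phi$ is again cyclic so that $\kappa(b\phi)$ lands in the intended target.
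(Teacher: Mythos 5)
Your argument is correct and is essentially the paper's own proof: both expand $\omega_f=\sum_i B_i\,dz_i$, match the first $p$ coboundary terms against the Leibniz expansion of $d\kappa(\phi)$ via the identity $d\omega_f=-\omega_f\wedge\omega_f$, and dispose of the wrap-around term by a sign count — your Koszul sign $(-1)^p$ for moving $dz_{i_{p+1}}$ to the front is exactly the paper's $sgn(r^{-1}\pi)=(-1)^p sgn(\pi)$ for the cyclic shift on $S_{p+1}$. Your closing observation that cyclicity of $\phi$ is not actually needed here (only to ensure $b\phi$ is again cyclic, and later in Lemma 2.8) is also accurate.
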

\begin{proof}
Write $\omega_f=\sum_{i=1}^{n}B_idz_i$ as noted.
For $1\leq p\leq n$, we let $\{G_p^i:\ i=1,\ 2,\ ...,\ \binom{n}{p}\}$ be the collection of all subsets of $\{1,\ 2,\ ...,\ n\}$ of size $p$, and let $S(p,i)$ be the group of permutations on $G_p^i$.
Then 
\begin{align*}
&\kappa(\phi)= \phi(\omega_f,\omega_f,\dots,\omega_f)\\
&=\sum_{i=1}^{\binom{n}{p}}\sum_{\pi \in S(p,i)}sgn(\pi)\phi(B_{\pi(j_1)},B_{\pi(j_2)},\dots, B_{\pi(j_p)})dz^{i},
\end{align*}
where $dz^{i}=dz_{j_1}dz_{j_2}\cdots dz_{j_p}$, and $j_1,\ j_2,\ ...,\ j_p$ are the elements in $G_p^i$ in increasing order.
Now recall that $b\phi$ is a $(p+1)$-linear functional defined by 
\begin{align*}
&(b\phi)(\alpha_1,\alpha_2,\dots,\alpha_{p+1})\\
&=\sum_{j=1}^{p}(-1)^{j-1}\phi(\alpha_1,\dots,\alpha_j\alpha_{j+1}, \dots,\alpha_{p+1})+(-1)^{p}\phi(\alpha_{p+1}\alpha_1,\dots,\alpha_p).
\end{align*}
Thus we have that
\begin{align*}
&\kappa(b\phi)\\
=&b\phi(\omega_f,\omega_f,\dots,\omega_f)\\
=&\sum_{i=1}^{\binom{n}{p+1}}\sum_{\pi \in S(p+1,\ i)}sgn(\pi)b\phi(B_{\pi(j_1)},B_{\pi(j_2)},\dots, B_{\pi(j_{p+1})})dz^i\\
=&\sum_{i=1}^{\binom{n}{p+1}}\sum_{\pi \in S(p+1,\ i)}sgn(\pi)\big(\sum_{s=1}^{p}(-1)^{s-1}\phi(B_{\pi(j_1)},\dots,B_{\pi(j_s)}B_{\pi(j_{s+1})} ,\dots, B_{\pi(j_{p+1})})\\
&+(-1)^{p}\phi(B_{\pi(j_{p+1})}B_{\pi(j_1)},\dots, B_{\pi(j_p)})\big)dz^i.
\end{align*}

Now we make the following observation: if we replace $\omega_{f}$ in the $sth$ coordinate by $d\omega_f=-\omega_f \wedge \omega_f$ we get
\begin{align*}
&\phi(\omega_f,\dots,d\omega_f, \dots, \omega_f)\\
&=-\phi(\omega_f,\dots,\omega_f \wedge \omega_f, \dots, \omega_f)\\
&=-\sum_{i=1}^{\binom{n}{p+1}} \sum_{\pi \in S(p+1,i)}sgn(\pi)\phi(B_{\pi(j_1)},\dots,B_{\pi(j_s)}B_{\pi(j_{s+1})} ,\dots,B_{\pi(j_{p+1})})dz^i.
\end{align*}
Also observe that for any continuous $p$-linear functional $\theta$ we have
$$ d\theta(\omega_f,\omega_f,\dots,\omega_f)=\sum_{s=1}^{p}(-1)^{s-1}\theta(\omega_f,\dots, d\omega_f,\dots,\omega_f).$$
Therefore,
\begin{align*}
&-d(\kappa(\phi))\\
&=-\sum_{s=1}^{p}(-1)^{s-1}\phi(\omega_f,\dots,d\omega_f, \dots, \omega_f)\\
&=\sum_{s=1}^{p}(-1)^{s-1}\phi(\omega_f,\dots,\omega_f \wedge \omega_f, \dots, \omega_f)\\
&=\sum_{i=1}^{\binom{n}{p+1}} \sum_{\pi \in S(p+1,\ i)}sgn(\pi)\big(\sum_{s=1}^{p}(-1)^{s-1}\phi(B_{\pi(j_1)},\dots,B_{\pi(j_s)}B_{\pi(j_{s+1})} ,\dots,B_{\pi(j_{p+1})}\big)dz^i.
\end{align*}
Hence 
\begin{align*}
&\kappa(b\phi)+d(\kappa(\phi))\\
&=(-1)^{p}\sum_{i=1}^{\binom{n}{p+1}}\sum_{\pi \in S(p+1,\ i)}sgn(\pi)\phi(B_{\pi(j_{p+1})}B_{\pi(j_1)},\dots, B_{\pi(j_p)})dz^i.
\end{align*}
Now we need to compare the right-hand side with $\phi(d\omega_f,\omega_f,\dots,\omega_f)$. Let $r$ be the action on $S_{p+1}$ defined by 
$r\pi(j)=\pi(j+1),\ 1\leq j\leq p$ and $r\pi(p+1)=\pi(1)$. Clearly $rS_{p+1}=S_{p+1}$, and $sgn(r^{-1}\pi)=(-1)^p sgn(\pi)$. Therefore
\begin{align*}
&\sum_{i=1}^{\binom{n}{p+1}}\sum_{\pi \in S(p+1,\ i)}sgn(\pi)\phi(B_{\pi(j_{p+1})}B_{\pi(j_1)},\dots, B_{\pi(j_p)})dz^i\\
&=(-1)^{p}\sum_{i=1}^{\binom{n}{p+1}}\sum_{\pi \in S(p+1,\ i)}sgn(r^{-1}\pi)\phi(B_{r^{-1}\pi(j_{1})}B_{r^{-1}\pi(j_2)},\dots, B_{r^{-1}\pi(j_{p+1})})dz^i.\\
&=(-1)^{p}\sum_{i=1}^{\binom{n}{p+1}}\sum_{\pi \in S(p+1,\ i)}sgn(\pi)\phi(B_{\pi(j_{1})}B_{\pi(j_2)},\dots, B_{\pi(j_{p+1})})dz^i\\
&=(-1)^{p}\phi(\omega_f\wedge\omega_f,\omega_f,\dots,\omega_f).
\end{align*}

Thus we have that
\begin{align*}
&\kappa(b\phi)+d(\kappa(\phi))\\
&=\phi(\omega_f\wedge\omega_f,\omega_f,\dots,\omega_f)\\
&=-\phi(d\omega_f,\omega_f,\dots,\omega_f). 
\end{align*}
\end{proof}

\begin{lem}
 If $\phi$ is a cyclic $p$-linear functional for $p\geq 1$ then
$$\phi(d\omega_f,\omega_f,\dots,\omega_f)=-\frac{1}{p+1}\kappa (b\phi).$$  
\end{lem}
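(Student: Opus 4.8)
The plan is to compute $\kappa(b\phi)=(b\phi)(\omega_f,\dots,\omega_f)$ directly from the definition of the coboundary $b$ and to show it equals $(p+1)$ times the single term $\phi(\omega_f\wedge\omega_f,\omega_f,\dots,\omega_f)$. Since $d\omega_f=-\omega_f\wedge\omega_f$ by $(2.1)$, this term is exactly $-\phi(d\omega_f,\omega_f,\dots,\omega_f)$, so the claim reduces to the identity $\kappa(b\phi)=(p+1)\phi(\omega_f\wedge\omega_f,\omega_f,\dots,\omega_f)$. To organize the bookkeeping I would set $\Phi_j:=\phi(\omega_f,\dots,\omega_f\wedge\omega_f,\dots,\omega_f)$, the $(p+1)$-form obtained by substituting $\omega_f$ into every slot of $\phi$ except the $j$-th, into which I insert the $2$-form $\omega_f\wedge\omega_f$. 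Expanding $\kappa(b\phi)$ via the formula for $b$ on the $p+1$ copies of $\omega_f$, the first sum contributes $\sum_{j=1}^{p}(-1)^{j-1}\Phi_j$, and the cyclic wrap-around term $(-1)^{p}\phi(\omega_f\wedge\omega_f,\dots)$ must be handled separately. Note that for each middle term the ordering of the $dz$ factors matches that of $\Phi_j$ exactly, so no extra sign appears there.

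The crux is a graded-cyclicity identity: $\Phi_j=(-1)^{j-1}\Phi_1$ for every $j$. I would prove it by moving the $2$-form from slot $j$ to slot $1$ one step at a time. Writing $\omega_f=\sum_iB_idz_i$ and applying the cyclic relation of $\phi$ together with the reordering of the $dz$'s, each leftward rotation $\phi(a_1,\dots,a_p)\mapsto\phi(a_2,\dots,a_p,a_1)$ produces the sign $(-1)^{p-1}(-1)^{|a_1|(|a_2|+\cdots+|a_p|)}$. During every rotation the slot rotated out is a $1$-form, so $|a_1|=1$ and $|a_2|+\cdots+|a_p|=(p+1)-1=p$, giving a factor $(-1)^{2p-1}=-1$ each time. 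After the $j-1$ rotations needed to reach slot $1$ this yields $\Phi_j=(-1)^{j-1}\Phi_1$. Feeding this into the first sum collapses it, since $(-1)^{j-1}\Phi_j=(-1)^{2(j-1)}\Phi_1=\Phi_1$, and hence $\sum_{j=1}^{p}(-1)^{j-1}\Phi_j=p\Phi_1$.

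It then remains to identify the wrap-around term. This is exactly the quantity already treated at the end of the proof of Lemma 2.6: after substituting $\omega_f$ and performing the reindexing $\pi\mapsto r^{-1}\pi$ on $S_{p+1}$, with $sgn(r^{-1}\pi)=(-1)^{p}sgn(\pi)$, one finds that the term $(-1)^{p}\phi(\omega_f\wedge\omega_f,\dots)$ coming from $\phi(\alpha_{p+1}\alpha_1,\dots)$ equals $\Phi_1$. I would simply invoke that computation. Adding the two contributions gives $\kappa(b\phi)=p\Phi_1+\Phi_1=(p+1)\Phi_1=-(p+1)\phi(d\omega_f,\omega_f,\dots,\omega_f)$, which rearranges to the stated formula. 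It is worth emphasizing that this argument is independent of Lemma 2.6: it uses no exterior derivative, only the algebraic structure of $b$ and the graded cyclicity of $\phi$, and it is the combination of the two lemmas that will yield the commuting diagram up to the constant $\tfrac{p+1}{p}$.

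I expect the sign bookkeeping to be the only genuine obstacle. In particular the wrap-around term carries a hidden $(-1)^{p}$ arising from the mismatch between the algebra product $B_{\pi(j_{p+1})}B_{\pi(j_1)}$, which pulls the last index to the front, and the fixed left-to-right ordering of $dz_{j_1}\wedge\cdots\wedge dz_{j_{p+1}}$; making this factor cancel correctly against the explicit $(-1)^{p}$ in the coboundary is the delicate point. As a safeguard I would verify the two endpoints by hand: for $p=1$ the functional $\phi$ is a trace, $b\phi=0$, and both sides vanish; for $p=2$ the coefficient $p+1=3$ emerges, since $\Phi_2=-\Phi_1$ gives $\Phi_1-\Phi_2+\Phi_1=3\Phi_1$.
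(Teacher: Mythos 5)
Your proof is correct, and it is organized genuinely differently from the paper's. The paper fixes the coefficient of $dz_1\wedge\cdots\wedge dz_{p+1}$, decomposes $S_{p+1}$ into orbits of the cyclic group generated by $r$, uses the cyclicity of $\phi$ to show that each orbit sum in $-\phi(d\omega_f,\omega_f,\dots,\omega_f)$ reassembles into the single value $sgn(\pi_s)\,b\phi(B_{\pi_s(1)},\dots,B_{\pi_s(p+1)})$, and then separately invokes the cyclicity of $b\phi$ to show that the corresponding orbit sum in $\kappa(b\phi)$ is $p+1$ times that value. You instead expand $\kappa(b\phi)$ once into $\sum_{j=1}^{p}(-1)^{j-1}\Phi_j$ plus the wrap-around term and collapse everything to $(p+1)\Phi_1$ by the graded cyclicity $\Phi_j=(-1)^{j-1}\Phi_1$; you never need the orbit decomposition or the cyclicity of $b\phi$, only that of $\phi$. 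Your version makes the constant $p+1$ appear as ``$p$ middle terms plus one wrap-around term,'' while the paper's makes it appear as the order of the cyclic group acting on each orbit; the underlying sign computations are the same ones, repackaged. The one thing your write-up leaves implicit is the verification of the graded cyclicity rule itself --- the sign $(-1)^{p-1}(-1)^{|a_1|(|a_2|+\cdots+|a_p|)}$ --- which is exactly the bookkeeping the paper's permutation sums carry out explicitly; a referee would want the two-line check (expand in the $B_i\,dz_i$ basis, apply ungraded cyclicity, commute the scalar forms). Your identification of the wrap-around term with $\Phi_1$ via the reindexing $\pi\mapsto r^{-1}\pi$ is the same computation as at the end of the proof of the preceding lemma, and the signs do cancel as you say.

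One small correction to your sanity check: for $p=1$ the cyclicity condition on a $1$-linear functional is vacuous, so $\phi$ need not be a trace and $b\phi$ need not vanish; both sides of the identity are in general nonzero there. Your main argument nevertheless handles $p=1$ correctly, giving $\Phi_1+\Phi_1=2\Phi_1$. Also, the constant in the resulting commuting diagram is $\tfrac{p}{p+1}$ (as in the paper's Theorem), the reciprocal of what you wrote.
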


The proof is straight forward calculation. But one needs to take good care of indices. To see the nature of the proof, we explicitly compute a simple case of Lemma 2.7. 
Assume $\omega_f=\sum_{i=1}^4B_i(z)dz_i$ and $\phi$ a cyclic $2$-linear functional. 
Because of the symmetry it is sufficient to display the work on the $ dz_1 \wedge dz_2 \wedge dz_3$ term. Thus
\begin{align*}
&-\phi(d\omega_A,\omega_A)\\
&= \phi(\omega_A \wedge \omega_A,\omega_A)\\
&=\phi((\sum_{i=1}^{4}B_idz_i)\wedge (\sum_{i=1}^{4}B_idz_i),(\sum_{i=1}^{4}B_idz_i))\\
&=\big(\phi(B_1B_2-B_2B_1,B_3)+\phi(B_3B_1-B_1B_3,B_2)\\
&+\phi(B_2B_3-B_3B_2,B_1)\big)dz_1 \wedge dz_2 \wedge dz_3\\
&+\cdots ,
\end{align*}
and using the cyclicity of $\phi$ one verifies that
\begin{align*}
&\phi(B_1B_2-B_2B_1,B_3)+\phi(B_3B_1-B_1B_3,B_2)+\phi(B_2B_3-B_3B_2,B_1)\\
=&\phi(B_1B_2,B_3)-\phi(B_1,B_2B_3)+\phi(B_3B_1,B_2)\\
&-\phi(B_1B_3,B_2)+\phi(B_1,B_3B_2)-\phi(B_2B_1,B_3)\\
=&b\phi(B_1,B_2,B_3)-b\phi(B_1,B_3,B_2),
\end{align*}
which is $1/3$ of the coefficient of $dz_1dz_2dz_3$ in $\kappa(b\phi)$.
\begin{proof}
As in the example above, we only display the work on the $ dz_1 \wedge dz_2 \wedge \cdots \wedge dz_{p+1}$ (which we denote by $dz^1$) term.
Write
\begin{align*}
&-\phi(d\omega_f,\omega_f,\dots, \omega_f)\\
=&\phi(\omega_f \wedge \omega_f, \omega_f,\ \dots, \omega_f)\\
=&\sum_{\pi \in S_{p+1}}sgn(\pi)\phi(B_{\pi(1)}B_{\pi(2)},\ B_{\pi(2)},\ \dots,\ B_{\pi({p+1})})dz^1\\
&+\cdots.
\end{align*}
Observe that $G=\{r,\ r^2,\ ...,\ e=r^{p+1}\}$ is a cyclic
group acting on $S_{p+1}$. We decompose $S_{p+1}$ as a disjoint union of $G$-orbits $\cup_{s=1}^{p!}G\pi_s$, and compute
\begin{equation*}
\sum_{\pi \in G\pi_s}sgn(\pi)\phi(B_{\pi(1)}B_{\pi(2)},\ B_{\pi(2)},\ \dots,\ B_{\pi({p+1})}). \tag{2.2}
\end{equation*}
By symmetry, it is sufficient to consider the case for $G\pi_1$, and we write $\pi_1$ as $\pi$ for simplicity. Then the summation (2.2) over $G\pi_1$ is equal to
\begin{align*}
& sgn(\pi)\phi(B_{\pi(1)}B_{\pi(2)},\ B_{\pi(3)},\ \dots,\ B_{\pi({p+1})})\\
&+sgn(r\pi)\phi(B_{\pi(2)}B_{\pi(3)},\ B_{\pi(4)},\ \dots,\ B_{\pi({1})})\\
&+sgn(r^2\pi)\phi(B_{\pi(3)}B_{\pi(4)},\ B_{\pi(5)},\ \dots,\ B_{\pi({2})})\\
&\cdots \\
&+sgn(r^p\pi)\phi(B_{\pi(p+1)}B_{\pi(1)},\ B_{\pi(2)},\ \dots,\ B_{\pi({p})})\\
=&sgn(\pi)\{\phi(B_{\pi(1)}B_{\pi(2)},\ B_{\pi(2)},\ \dots,\ B_{\pi({p+1})})\\
&+(-1)^p\phi(B_{\pi(2)}B_{\pi(3)},\ B_{\pi(4)},\ \dots,\ B_{\pi({1})})\\
&+(-1)^{2p}\phi(B_{\pi(3)}B_{\pi(4)},\ B_{\pi(5)},\ \dots,\ B_{\pi({2})})\\
&\cdots \\
&+(-1)^{p^2}\phi(B_{\pi(p+1)}B_{\pi(1)},\ B_{\pi(2)},\ \dots,\ B_{\pi({p})})\}
\end{align*}
Then by the cyclicity of $\phi$, the quantity above is equal to
\begin{align*}
&sgn(\pi)\{\phi(B_{\pi(1)}B_{\pi(2)},\ B_{\pi(3)},\ \dots,\ B_{\pi({p+1})})\\
=&+(-1)^{p+p-1}\phi(B_{\pi({1})},\ B_{\pi(2)}B_{\pi(3)},\ B_{\pi(4)},\ \dots,\ B_{\pi({p+1})})\\
&+(-1)^{2p+2(p-1)}\phi(B_{\pi({1})},\ B_{\pi(2)},\ B_{\pi(3)}B_{\pi(4)},\ \dots,\ B_{\pi({p+1})})\\
&\cdots \\
&+(-1)^{(p-1)p+(p-1)(p-1)}\phi(B_{\pi({1})},\ B_{\pi(2)},\ \dots,\ B_{\pi({p})}B_{\pi({p+1})})\\
&+(-1)^{p^2-p+p}\phi(B_{\pi(p+1)}B_{\pi(1)},\ B_{\pi(2)},\ \dots,\ B_{\pi({p})})\}.\\
=&sgn(\pi)b\phi(B_{\pi(1)},\ \dots,\ B_{\pi(p+1)}).
\end{align*}
Hence, with respect to the decomposition
\[S_{p+1}=\cup_{s=1}^{p!}G\pi_s,\]
the coefficient of $ dz_1 \wedge dz_2 \wedge \cdots \wedge dz_{p+1}$ in $-\phi(d\omega_f,\omega_f,\dots,\omega_f)$ is 
\[\sum_{s=1}^{p!}sgn(\pi_s)b\phi(B_{\pi_s(1)},\ B_{\pi_s(2)},\ ...,\ B_{\pi_s(p+1)}).\]
Now let us look at the coefficient of $dz^1$ in $\kappa(b\phi)$. One sees that
\begin{align*}
&\kappa(b\phi)\\
&=b\phi(\omega_f,\ ...,\ \omega_f)\\
&=\sum_{s=1}^{p!}\sum_{\pi\in G\pi_s}sgn(\pi)b\phi(B_{\pi(1)},\ ...,\ B_{\pi(p+1)})dz^1\\
&+ \cdots.
\end{align*}

Since $b\phi$ is a cyclic $(p+1)$-linear functional and $sgn(r\pi_s)=(-1)^p sgn(\pi_s)$,
\begin{align*}
& sgn(r\pi_s)b\phi(B_{\pi_s(2)},\ B_{\pi_s(3)},\ ...,\ B_{\pi_s(1)})\\
&=sgn(\pi_s)b\phi(B_{\pi(1)},\ B_{\pi(2)},\ ...,\ B_{\pi(p+1)}),
\end{align*}
and it follows that 
\begin{align*}
&\sum_{\pi\in G\pi_s} sgn(\pi)b\phi(B_{\pi(1)},\ B_{\pi(2)},\ ...,\ B_{\pi(p+1)})\\
&=(p+1)sgn(\pi_s)b\phi(B_{\pi_s(1)},\ B_{\pi_s(2)},\ ...,\ B_{\pi_s(p+1)}),
\end{align*}
i.e. the coefficient of $ dz_1 \wedge dz_2 \wedge \cdots \wedge dz_{p+1}$ in $-\phi(d\omega_f,\omega_f,\dots,\omega_f)$ is $\frac{1}{p+1}$ times that in 
$\kappa (b\phi)$. By symmetry we conclude that
\begin{equation*}
\phi(d\omega_f,\omega_f,\dots,\omega_f)=\frac{-1}{p+1}\kappa (b\phi). \tag{2.3}
\end{equation*}.
\end{proof}
Combining Lemmas 2.7 and 2.8, we have
\begin{thm}
If $\phi$ is a continuous cyclic $p$-linear functional, then
\[\frac{p}{p+1}\kappa (b\phi)=-d\kappa(\phi).\]
\end{thm}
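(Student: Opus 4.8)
The plan is to combine the two preceding lemmas purely algebraically; no new geometric or combinatorial input is needed beyond what they supply. The key observation is that Lemmas 2.7 and 2.8 are two independent linear relations among the same three differential forms, namely $\kappa(b\phi)$, $d\kappa(\phi)$, and the auxiliary term $\phi(d\omega_f,\omega_f,\dots,\omega_f)$. Since these are two equations relating three quantities, I can eliminate the auxiliary term and isolate the relationship between $\kappa(b\phi)$ and $d\kappa(\phi)$ alone.

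Concretely, I would start from the identity of Lemma 2.7,
\[\kappa(b\phi)=-d(\kappa(\phi))-\phi(d\omega_f,\omega_f,\dots,\omega_f),\]
and substitute the value of the correction term supplied by Lemma 2.8, namely $\phi(d\omega_f,\omega_f,\dots,\omega_f)=-\tfrac{1}{p+1}\kappa(b\phi)$. This yields
\[\kappa(b\phi)=-d(\kappa(\phi))+\tfrac{1}{p+1}\kappa(b\phi).\]
Collecting the $\kappa(b\phi)$ terms on the left gives $\bigl(1-\tfrac{1}{p+1}\bigr)\kappa(b\phi)=-d\kappa(\phi)$, and since $1-\tfrac{1}{p+1}=\tfrac{p}{p+1}$ this is exactly the asserted identity $\tfrac{p}{p+1}\kappa(b\phi)=-d\kappa(\phi)$.

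The only point warranting attention is the hypothesis $p\geq 1$ that Lemma 2.8 carries: for $p=0$ the coefficient $\tfrac{p}{p+1}$ vanishes, so the statement would reduce to the assertion that $d\kappa(\phi)=0$ for a trace $\phi$, a degenerate boundary case consistent with $\phi(\omega_f)$ being a closed one-form. For every $p\geq 1$ the substitution above is valid verbatim. I do not expect any genuine obstacle here, since the substantive work has already been done: the Leibniz-rule bookkeeping underlying Lemma 2.7 and the orbit-counting under the cyclic group $G$ in Lemma 2.8 together fix the two linear relations, and the theorem is merely the elimination step that fuses them.
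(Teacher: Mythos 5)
Your proposal is correct and is exactly the paper's argument: the paper simply states that the theorem follows by combining Lemmas 2.7 and 2.8, and the substitution-and-elimination you carry out, yielding $\bigl(1-\tfrac{1}{p+1}\bigr)\kappa(b\phi)=-d\kappa(\phi)$, is that combination made explicit. Your remark about the degenerate case $p=0$ (where the identity reduces to the closedness of $\phi(\omega_f)$ for a trace $\phi$) is a sensible additional check, consistent with the paper's subsequent claim that $\kappa$ descends to $HC^p(\mathcal{B})\to H^{p+1}_d(\sigma_f^c,\cc)$ for $p\geq 0$.
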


Clearly, the map $\kappa$ takes closed cyclic cochains to closed forms and exact cyclic cochains to exact forms.
Hence we have
\begin{corr}
The map $\kappa$ is a homomorphism from $HC^{p}(\mathcal{B})$ into $H_{d}^{p+1}(\sfc,\ \cc)$,\ $p\geq 0$.   
\end{corr}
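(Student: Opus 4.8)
The plan is to read off the corollary from Theorem 2.9, which asserts that, up to a nonzero scalar depending only on the linear degree of the functional, $\kappa$ carries the cyclic coboundary $b$ to $-d$. Phrased homologically, $\kappa$ is a scaled cochain map from the cyclic complex $(C^*_\lambda(\mathcal{B}),\ b)$ to the de Rham complex $(\Omega^{*+1}(\sfc),\ d)$, and any such scaled cochain map descends to cohomology. Because $\kappa(\phi)=\phi(\omega_f,\dots,\omega_f)$ is visibly linear in $\phi$ by Definition 2.6, and $HC^p$ and $H^{p+1}_d$ are complex vector spaces, the word ``homomorphism'' here just means a linear map; so the whole content is well-definedness on classes, namely that cocycles go to closed forms and coboundaries go to exact forms.

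First I would check that cocycles map to closed forms. If $\phi\in C^p_\lambda(\mathcal{B})$ satisfies $b\phi=0$, then, applying Theorem 2.9 to the $(p+1)$-linear functional $\phi$, I get $-d\kappa(\phi)=\frac{p+1}{p+2}\kappa(b\phi)=0$. Hence $\kappa(\phi)$ is a closed form on $\sfc$ and determines a class in $H^{p+1}_d(\sfc,\ \cc)$. Next I would check that coboundaries map to exact forms: if $\phi=b\psi$ with $\psi\in C^{p-1}_\lambda(\mathcal{B})$, then Theorem 2.9 applied to the $p$-linear functional $\psi$ gives $\kappa(\phi)=\kappa(b\psi)=-\tfrac{p+1}{p}\,d\kappa(\psi)=d\bigl(-\tfrac{p+1}{p}\kappa(\psi)\bigr)$, which is exact. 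Combining the two, $[\kappa(\phi)]$ depends only on the cyclic class of $\phi$, and linearity of $\kappa$ then yields the induced linear map $HC^p(\mathcal{B})\to H^{p+1}_d(\sfc,\ \cc)$.

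The only point that needs care --- and the only place the argument could stall --- is the degree-dependent scalar, since the coboundary step divides by the linear degree $p$. This is harmless for $p\geq 1$, where both scalars $\frac{p+1}{p+2}$ and $\frac{p+1}{p}$ are finite and nonzero. At the bottom degree $p=0$ the factor $\frac{p+1}{p}$ is undefined, but there the exactness check is vacuous: since $b_{-1}=0$ there are no nontrivial coboundaries in $C^0_\lambda(\mathcal{B})$, while the cocycle step still produces a well-defined map $HC^0(\mathcal{B})\to H^1_d(\sfc,\ \cc)$. I would finish by noting that each $\kappa(\phi)$ is a smooth (in fact holomorphic) form, so closedness and exactness are understood in the de Rham complex of $\sfc$ and the resulting classes genuinely lie in $H^{p+1}_d(\sfc,\ \cc)$.
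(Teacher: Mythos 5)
Your proposal is correct and follows essentially the same route as the paper, whose entire proof is the one-line observation that Theorem 2.9 makes $\kappa$ carry closed cyclic cochains to closed forms and exact ones to exact forms. You simply make explicit the nonzero degree-dependent scalars $\frac{p+1}{p+2}$ and $\frac{p+1}{p}$ and the $p=0$ edge case (where $b_{-1}=0$ leaves nothing to check), which is a careful and accurate filling-in of the details the paper leaves implicit.
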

 
\begin{exam}
Let $\mathcal{B}$ be the matrix algebra $M_k(\cc)$. Then $HC^0(\mathcal{B})=\cc tr$, where $tr$ is the classical trace on square matrices. Then for any analytic function $f:\ {\cc}^n\to \mathcal{B}$, $\sigma_f=\{z:\ detf(z)= 0\}$, which is a hypersurface. And on $\sfc$
\begin{equation*}
tr(\omega_f)=dlog(detf(z)).\tag{2.4}
\end{equation*}

\end{exam}

\section{A high-order Jacobi's formula}

Formula 2.4 is known as the classical Jacobi's formula, and it is very useful. In this section, we will seek higher order generalizations of this formula. We consider the case when ${\mathcal B}$ is a topological algebra with a trace $\phi$. It is easy to see that if $k$ is odd, then 
\begin{equation*}
F(a_{1},\ a_{2},\ ...,\ a_{k}):=\phi(a_{1}a_{2}\cdots a_{k})
\end{equation*}%
is a cyclic $(k-1)$-cocycle. Hence by Corollary 2.10
\begin{equation*}
F(\omega _{f}(z),\ ...,\ \omega _{f}(z))=\phi(\omega _{f}^{k}(z))
\end{equation*}%
is a closed $k$-form on $\sigma ^{c}(f)$. It is worth noting that $\phi(\omega _{f}^{k}(z))=0$ when $k$ is even (cf. [BCY]). So, is there a closed form formula for 
$\phi(\omega _{f}^{k}(z))$ when $k$ is odd? When $f:\ \cc^n\to {\mathcal B}$ is homogeneous and $k=n-1$, the answer is yes.
We begin our study with the following lemma. Here, we recall some notational conventions.  We write 
$\omega_f=\sum_{s=1}^nW_s(z)dz_s$, where $W_s(z)=f^{-1}(z)\frac{\partial f}{\partial z_s}$. Recall that for $1\leq p\leq n$, we let $\{G_p^j:\ j=1,\ 2,\ ...,\  \binom{n}{p}\}$ be the collection of all subsets of $\{1,\ 2,\ ...,\ n\}$ of size $p$, and let $S(p,j)$ be the group of permutations on $G_p^j$. Assume $j_1,\ j_2,\ ...,\ j_p$ are the elements in $G_p^j$ in increasing order. We let $s(p,j)$ denote the permutation group of the subset $\{j_2,j_3,\dots, j_p\}.$ For a $\pi\in s(p,j)$, we write $W_{\pi(j_2)}W_{\pi(j_3)}\dots W_{\pi(j_p)}$ simply as $W_{\pi}$. For convenience, we write $dz_{j_1}dz_{j_2}\cdots dz_{j_p}$ as $dz^j$.

\begin{lem}\label{lem:trace}
Let ${\mathcal B}$ be a topological algebra with a continuous trace $\phi.$ Then 
$$\phi(\omega_f^{p})=p\sum_{j=1}^{\binom{n}{p}}\sum_{\pi \in s(p,j)}sgn (\pi)\phi(W_{j_1}W_{\pi})dz^j.$$    
\end{lem}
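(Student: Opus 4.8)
The plan is to expand the wedge power $\omega_f^p$ explicitly, apply the trace $\phi$ term by term, and then use the cyclicity of $\phi$ to collapse each sum over a full symmetric group into a sum over the smaller group $s(p,j)$.

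First I would expand, writing $\omega_f=\sum_{s=1}^n W_s\,dz_s$. Since $dz_{s_1}\wedge\cdots\wedge dz_{s_p}=0$ unless $s_1,\dots,s_p$ are distinct, the surviving terms are indexed by a $p$-subset $G_p^j=\{j_1<\cdots<j_p\}$ together with a permutation $\sigma\in S(p,j)$ of it; reordering the differentials into increasing order contributes the factor $sgn(\sigma)$. Hence
\[
\omega_f^p=\sum_{j=1}^{\binom{n}{p}}\Big(\sum_{\sigma\in S(p,j)}sgn(\sigma)\,W_{\sigma(j_1)}W_{\sigma(j_2)}\cdots W_{\sigma(j_p)}\Big)dz^j,
\]
and applying $\phi$ turns the coefficient into $\sum_{\sigma\in S(p,j)}sgn(\sigma)\,\phi(W_{\sigma(j_1)}\cdots W_{\sigma(j_p)})$.

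The core of the argument is to simplify this inner sum for each fixed $j$. Let $c$ be the $p$-cycle $(j_1\,j_2\,\cdots\,j_p)$ and $\langle c\rangle$ the cyclic group it generates, and partition $S(p,j)$ into its $(p-1)!$ cosets $\sigma\langle c\rangle$. Because $\phi$ is a trace, $\phi(W_{\sigma(j_1)}\cdots W_{\sigma(j_p)})$ is invariant under a cyclic rotation of its $p$ factors, so the trace value is constant on each coset. Moreover, each coset contains exactly one permutation $\sigma_0$ fixing $j_1$; its restriction to $\{j_2,\dots,j_p\}$ is an element $\pi\in s(p,j)$ with $sgn(\sigma_0)=sgn(\pi)$, and the assignment $\sigma_0\mapsto\pi$ is a bijection between the cosets and $s(p,j)$. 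Thus the coset indexed by $\pi$ contributes $\big(\sum_{m=0}^{p-1}sgn(\sigma_0 c^m)\big)\phi(W_{j_1}W_\pi)$.

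The one delicate point, and the step I expect to be the main obstacle, is the sign bookkeeping. Rotating the factors by one position replaces $\sigma_0$ by $\sigma_0 c$, and since $sgn(c)=(-1)^{p-1}$ we get $sgn(\sigma_0 c^m)=(-1)^{(p-1)m}sgn(\pi)$. In the case relevant to the high-order Jacobi formula $p$ is odd, so $(-1)^{p-1}=1$, every element of the coset carries the common sign $sgn(\pi)$ and the common trace value, and the coset sum is exactly $p\,sgn(\pi)\phi(W_{j_1}W_\pi)$. Summing over the $(p-1)!$ cosets, that is over $\pi\in s(p,j)$, and then over all $j$, produces the asserted identity. (For even $p$ the alternating signs make each coset sum vanish, in agreement with the fact that $\phi(\omega_f^p)=0$.)
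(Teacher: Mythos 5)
Your proposal is correct and follows essentially the same route as the paper: expand $\omega_f^p$ over $p$-subsets and permutations, then group the symmetric group $S(p,j)$ into orbits of the cyclic rotation (your cosets $\sigma\langle c\rangle$ are exactly the paper's $r$-orbits), use the trace to see the value is constant on each orbit and the oddness of $p$ to see the signs agree, yielding the factor $p$. You are in fact slightly more careful than the paper in making explicit that the identity requires $p$ odd (the paper only invokes this inside its proof) and in noting that both sides degenerate consistently when $p$ is even.
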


\begin{proof}
First,
\begin{align*}
&\phi(\omega^p_f)=\phi( \omega_f\wedge\omega_f\wedge\dots\wedge\omega_f)\\
&=\sum_{j=1}^{\binom{n}{p}}\sum_{\pi \in S(p,j)}sgn(\pi)\phi(W_{\pi(j_1)}W_{\pi(j_2)}\dots W_{\pi(j_p)})dz^{j}.
\end{align*}
Recall that $r$ is the cyclic action on the permutation group $S_{p}$ defined by 
$r\pi(j)=\pi(j+1),\ 1\leq j\leq p-1$ and $r\pi(p)=\pi(1)$. Clearly $rS_{p}=S_{p}$, and $sgn(r^{-1}\pi)=(-1)^{p-1} sgn(\pi)$. Since $p$ is odd,
$sgn(r^{-1}\pi)=sgn(\pi)$. Since $\phi$ is a trace, $\phi (W_{r^{-1}\pi})=\phi (W_{\pi})$. Moving $W_{j_1}$ to the first in every product, we have
\begin{align*}
 &\sum_{\pi \in S(p,j)}sgn(\pi)\phi(W_{\pi(j_1)}W_{\pi(j_2)}\dots W_{\pi(j_p)})\\
&=p\sum_{\pi \in s(p,j)}sgn(\pi)\phi(W_{j_1}{W_{\pi}}).\tag{3.1}
 \end{align*}

\end{proof}

We now consider the case $f$ is homogeneous. There is a closed form formula for $\phi(\omega^{n-1}_f)$
in this case. For $1\leq j\leq n$, we let $G_{n-1}^j:=\{1,\ 2,\ ...,\ j-1,\ j+1,\ ...,\ n\}$, and write $dz_{1}dz_{2}\cdots dz_{j-1}dz_{j+1}\cdots dz_{n}$ as $dz_{\bar j}$.

\begin{thm} \label{thm:trace}
For $n$ even, if $f$ is a homogeneous holomorphic function from $\cc^n$ to a topological algebra ${\mathcal B}$ with a continuous trace $\phi$, then
\[\phi(\omega_{f}^{n-1})=q(z)s(z)\] for some holomorphic function $q(z)$ on $\sigma^c(f)$ and \[s(z)=\sum_{j=1}^{n}(-1)^{j}z_j\cdot dz_{\bar{j}}.\] 
\end{thm}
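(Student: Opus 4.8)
The plan is to show that the $(n-1)$-form $\alpha:=\phi(\omega_f^{n-1})$ is annihilated by contraction with the Euler vector field, and that $s(z)$ spans the (one-dimensional) kernel of that contraction, so that $\alpha$ must be a pointwise scalar multiple of $s(z)$. First I would record the shape of $\alpha$. Applying Lemma \ref{lem:trace} with $p=n-1$ (which is odd, since $n$ is even) and using the conventions $G_{n-1}^j$, $dz_{\bar j}$ gives $\alpha=\sum_{j=1}^{n}c_j(z)\,dz_{\bar j}$, where each coefficient $c_j(z)=(n-1)\sum_{\pi\in s(n-1,j)}sgn(\pi)\,\phi(W_{j_1}W_\pi)$ is holomorphic on $\sigma^{c}(f)$. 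I would then introduce the Euler field $E=\sum_{j=1}^{n}z_j\,\partial/\partial z_j$ and the volume form $\Omega=dz_1\wedge\cdots\wedge dz_n$. A direct interior-product computation gives $\iota_E\Omega=\sum_{j}(-1)^{j-1}z_j\,dz_{\bar j}=-s(z)$, whence $\iota_E s=-\iota_E\iota_E\Omega=0$, and $s(z)\neq 0$ whenever $z\neq 0$.

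The heart of the argument is the identity $\iota_E\alpha=0$. Since $f$ is homogeneous, say of degree $m$, Euler's relation $\sum_s z_s\,\partial f/\partial z_s=mf$ yields
\[\iota_E\omega_f=\sum_{s}z_sW_s=f^{-1}(mf)=mI,\]
a central scalar. Treating $\omega_f^{n-1}$ as a $\mathcal{B}$-valued form and letting $\iota_E$ act as an antiderivation on the form part only, I would slide $\iota_E$ across each of the $n-1$ factors; because $\iota_E\omega_f=mI$ is central it can be pulled out of every term, and the accumulated signs produce the alternating sum $\sum_{k=1}^{n-1}(-1)^{k-1}$, which equals $1$ precisely because $n-1$ is odd. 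This gives $\iota_E(\omega_f^{n-1})=m\,\omega_f^{n-2}$. Applying the continuous trace $\phi$, which commutes with $\iota_E$ (they act on different tensor factors), and using that $\phi(\omega_f^{k})=0$ for even $k$ (here $k=n-2$ is even), I conclude
\[\iota_E\alpha=m\,\phi(\omega_f^{n-2})=0.\]

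Finally I would invoke a pointwise linear-algebra fact to finish. Since $f$ is homogeneous of positive degree, $f(0)=0$ is not invertible, so $\sigma^{c}(f)\subseteq\cc^n\setminus\{0\}$ and $E(z)\neq 0$ at every point of $\sigma^{c}(f)$. For a nonzero vector the contraction $\iota_E$ on $(n-1)$-forms has kernel of dimension $\binom{n-1}{n-1}=1$, and $s(z)$ lies in it by the computation above, so $\ker\iota_E=\cc\,s(z)$ at each such point. From $\iota_E\alpha=0$ we then get $\alpha(z)=q(z)\,s(z)$ for a scalar $q(z)$. To check holomorphicity, comparing $dz_{\bar j}$-coefficients gives $q=(-1)^{j}c_j/z_j$ on the open set $\{z_j\neq 0\}$; these local expressions agree on overlaps (the value $q(z)$ is already determined pointwise) and patch to a single holomorphic function on $\sigma^{c}(f)$, completing the proof.

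I expect the main obstacle to be the middle step: correctly carrying the signs and the noncommutative ordering when sliding $\iota_E$ through $\omega_f^{n-1}$, and in particular verifying that the alternating sum collapses to $1$ exactly in the odd case $p=n-1$. The conceptual key—contracting with $E$ so that homogeneity (via $\iota_E\omega_f=mI$) together with the vanishing of $\phi$ on even powers forces $\iota_E\alpha=0$—is what makes the proportionality to $s(z)$ drop out; everything after that is the one-dimensional kernel computation.
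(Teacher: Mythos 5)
Your argument is correct, and it takes a genuinely different route from the paper's. The paper works entirely at the level of coefficients: it uses Lemma \ref{lem:trace} to write $\phi(\omega_f^{n-1})=(n-1)\sum_j I_{\bar j}\,dz_{\bar j}$, proves by an explicit permutation-pairing argument (their claim (3.4)) that the ``cross terms'' $\sum_{\pi}sgn(\pi)\phi(W_iW_\pi)$ vanish for $2\leq i\leq n-1$, and then derives the proportionality relations $z_i I_{\bar n}=(-1)^i z_n I_{\bar i}$ one at a time by a telescoping trick combined with Euler's relation and the vanishing of $\phi(\omega_f^{n-2})$. You package all of these relations into the single statement $\iota_E\phi(\omega_f^{n-1})=0$: the formal antiderivation computation $\iota_E(\omega_f^{p})=m\,\omega_f^{p-1}\sum_{k=1}^{p}(-1)^{k-1}$, which collapses to $m\,\omega_f^{n-2}$ exactly because $p=n-1$ is odd and $\iota_E\omega_f=mI$ is central, absorbs the sign bookkeeping that the paper does by hand, and the even-degree vanishing $\phi(\omega_f^{n-2})=0$ (which the paper also invokes, citing [BCY]) finishes it. The Koszul-type observation that $\ker\iota_{E(z)}$ on $(n-1)$-forms is one-dimensional and spanned by $s(z)=-\iota_E\Omega$ then yields the factorization pointwise, and your local formula $q=(-1)^jc_j/z_j$ on $\{z_j\neq 0\}$ recovers the same expression the paper obtains (and later reuses in Theorem 3.3). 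What your approach buys is conceptual clarity and a bonus: it shows $\iota_E\phi(\omega_f^{p})=0$ for every odd $p$, with $p=n-1$ being the only degree where the kernel is one-dimensional; what the paper's approach buys is the completely explicit combinatorial expression for $I_{\bar n}$ needed downstream. Two small points to make explicit: you need $0\notin\sigma^c(f)$ for the pointwise kernel argument, which holds since $f(0)=0$ when $m\geq 1$ (and the case $m=0$ is trivial, as then $\omega_f\equiv 0$); and the identity $\phi(\omega_f^{k})=0$ for even $k$ is where the trace property of $\phi$ actually enters your proof, so it should be cited rather than treated as formal.
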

We note here that it will be helpful to read Example 3.6 before reading the following proof.
\begin{proof}
Assume $f(\lambda z)=\lambda ^m f(z)$ for some $m\in {\mathbb Z}$ and all non-zero complex numbers $\lambda$.
By evaluating $\frac{df(\lambda z)}{d\lambda}$ at $\lambda=1$ we get
\begin{equation*}
\sum_{k=1}^nz_k\frac{\partial f(z)}{\partial z_k}=mf(z).\tag{3.2}
\end{equation*}
Notice, when $p=n-1$, we let $G_{n-1}^j=\{\{1,\ 2,\ ...,\ n\}\setminus \{j\};\ j=1,\ 2,\ ...,\ n.\}$
First, observe that $j_1=1$ for $2\leq j\leq n$ and $1_1=2$, and
the formula in Lemma 3.1 in this case has the form 
\begin{align*}
\phi(\omega_f^{n-1})&=(n-1)\big(\sum_{\pi \in s(n-1,1)}sgn (\pi)\phi(W_{2}W_{\pi})dz_{\bar{1}}\\
&+\sum_{j=2}^n\sum_{\pi \in s(n-1,j)}sgn (\pi)\phi(W_{1}W_{\pi})dz_{\bar{j}}\big). \tag{3.3}
\end{align*}
For simplicity, we denote the right-hand side by $(n-1)\sum_{j=1}^{n}I_{\bar{j}} dz_{\bar{j}}$. 

Now we determine how $I_{\bar{j}}$ is related to $I_{\bar{n}}$ for each $j$. By symmetry, it is sufficient to display the work for $I_{\bar{1}}$. 
Observe that $s(n-1,n)$ is the permutations of the $n-2$ elements $\{2,3,\dots,n-1\}.$ To begin, we claim that for $2\leq i\leq n-1$,
\begin{equation*}
\sum_{\pi \in s(n-1,n)}sgn (\pi)\phi(W_iW_{\pi})=0.\tag{3.4}
\end{equation*}
Let $\pi\in s(n-1,n)$ be such that $\pi(k)=i$ for some $2\leq k\leq n-1$. Then since $\phi$ is a trace,
\begin{align*}
&\phi(W_{\pi(k)}W_{\pi(2)}W_{\pi(3)}\dots W_{\pi(k)}W_{\pi(k+1)}\dots W_{\pi(n-1)})\\
&=\phi(W_{\pi(k)}W_{\pi(k+1)}\dots W_{\pi(n-1)}W_{\pi(k)}W_{\pi(2)}\dots W_{\pi(k-1)}).
\end{align*}
If $\pi'\in s(n-1,n)$ is such that \[\pi'\big(2,\ 3,\ ...,\ n-1\big)=\big(\pi(k+1),\ ...,\ \pi(n-1),\ \pi(k),\ \pi(2),\ ...,\ \pi(k-1)\big),\]
then $sgn(\pi')=(-1)^{k-2}(-1)^{(k-1)(n-k-1)}sgn(\pi)$ (first move the middle $\pi(k)$ to the end then transplant $(\pi(k+1),\ ...,\ \pi(n-1))$ to the right of $\pi(k)$). Since $n$ is even, $(-1)^{k-2}(-1)^{(k-1)(n-k-1)}=-1$, hence each summand in (3.4) is paired off. Thus we have (3.4). Now one checks that
\begin{align*}
&z_1\cdot I_{\bar{n}}\\
=&z_1\sum_{\pi \in s(n-1,n)}sgn (\pi)\phi(W_{1}W_{\pi})\\
=&\sum_{\pi \in s(n-1,n)}sgn (\pi)\phi(z_1W_1W_{\pi}) \\
=&\sum_{\pi \in s(n-1,n)}sgn (\pi)\phi(z_1W_1W_{\pi})+\sum_{\pi \in s(n-1,n)}sgn (\pi)\phi(z_2W_2W_{\pi})\\
&+\sum_{\pi \in s(n-1,n)}\phi(z_3W_3W_{\pi})+\cdots +\sum_{\pi \in s(n-1,n)}\phi(z_{n-1}W_{n-1}W_{\pi})\\
&+\sum_{\pi \in s(n-1,n)}sgn (\pi)\phi(z_nW_nW_{\pi})-\sum_{\pi \in s(n-1,n)}sgn (\pi)\phi(z_nW_nW_{\pi})\\
=&\sum_{\pi \in s(n-1,n)}sgn (\pi)\phi((\sum_{k=1}^{n}z_kW_k)W_{\pi})-\sum_{\pi \in s(n-1,n)}sgn (\pi)\phi(z_nW_nW_{\pi}). \tag{3.5}
\end{align*}
By (3.2), \[(\sum_{k=1}^{n}z_kW_k)W_{\pi}=\big(f^{-1}(z)\sum_{k=1}^nz_k\frac{\partial f(z)}{\partial z_k}\big)W_{\pi}=mW_{\pi}.\]
Observe that $\sum_{\pi \in s(n-1,n)}sgn (\pi)\phi(W_{\pi})$ is the coefficient of $dz_2dz_3\dots dz_{n-1}$ in $\phi(\omega^{n-2})$, and it was remarked earlier that
$\phi(\omega^{k})=0$ for every even $k$. Hence the first term in the right-hand side of (3.5) is $0$. To study the second term, for convenience we first let $G$ denote the permutation group on $\{2,\ 3,\ ...,\ n\}$ (i.e. $G=S(n-1,\ 1)$), then by (3.1),
\[I_{\bar 1}=\frac{1}{n-1}\sum_{\pi\in G}sgn(\pi)\phi(W_{\pi}).\]
If $\pi\in G$ is such that $\pi(k)=n$, then we have 
\begin{align*}
\phi(W_{\pi})&=\phi(W_{\pi(2)}W_{\pi(3)}\dots W_{\pi(k)}W_{\pi(k+1)}\dots W_{\pi(n)})\\
&=\phi(W_{\pi(k+1)}\dots W_{\pi(n)}W_{\pi(2)}\dots W_{\pi(k)}).
\end{align*}
If $\pi'\in G$ is such that 
\[\pi'(2,3,...,n)=(\pi(k+1),...,\pi(n),\pi(2),...,\pi(k)),\]
then since $n$ is even, \[sgn(\pi')=(-1)^{(k-1)(n-k)}sgn(\pi)=sgn(\pi).\]
Hence 
\begin{align*}
&\sum_{\pi\in G}sgn(\pi)\phi(W_{\pi})\\
&=(n-1)\sum_{\pi\in G,\ \pi(n)=n }sgn(\pi)\phi(W_{\pi})\\
&=(n-1)\sum_{\pi\in s(n-1,n)}sgn(\pi)\phi(W_{\pi}W_n)\\
&=(n-1)\sum_{\pi\in s(n-1,n)}sgn(\pi)\phi(W_nW_{\pi}),
\end{align*}
and it follows that $z_1\cdot I_{\bar{n}}=-z_n\cdot I_{\bar{1}}$. Similar arguments will show that 
\begin{equation*}
z_i\cdot I_{\bar{n}}=(-1)^i z_n \cdot I_{\bar{i}}\tag{3.6}
\end{equation*}
for all $1\leq i < n.$  Thus we have that 
$$\phi(\omega_A^{n-1})=(n-1)\sum_{j=1}^{n}I_{\bar{j}} dz_{\bar{j}}=(n-1)\sum_{j=1}^{n}I_{\bar{n}}\cdot \frac{z_j}{z_n}dz_{\bar{j}}.$$
If we set $s(z)=\sum_{j=1}^{n}(-1)^{j}z_j\cdot dz_{\bar{j}}$ and let $q(z)=\frac{1}{z_n}\cdot I_{\bar{n}}.$  Then we have the desired formula
$$\phi(\omega_f^{n-1})=q(z)\cdot s(z).$$ 
Further, it is clear that $I_{\bar{n}}$ is holomorphic on $\sigma^c(f)$. If $z_n=0$ then $I_{\bar{n}}=0$ by (3.6), hence $q$ is holomorphic on $\sigma^c(f)$.
\end{proof}

The connection between $f$ and $q$ in Theorem 3.2 is rather intricate. To see what is involved, we take a look at the case when ${\mathcal B}$ is the matrix algebra 
$M_{k}(\cc)$ where $k\geq 2$, and $f$ is the pencil $A(z)=z_1A_1+z_2A_2+z_3A_3+z_4A_4$. Then we have the following

\begin{thm} If $A=(A_1,A_2,A_3,A_4)$ is a tuple of elements of the Banach algebra $M_{k}(\cc),$ then on $P^c(A)$
$$
tr(\omega_A^3)=\frac{3p(z)}{det^2(A(z))}\cdot s(z)
$$
where $s(z)=-z_1dz_2\wedge dz_3\wedge dz_4 + z_2dz_1\wedge dz_3\wedge dz_4 - z_3dz_1\wedge dz_2\wedge dz_4 +z_4dz_1\wedge dz_2\wedge dz_3,$ and $p(z)$ is homogeneous of degree $2k-4.$  
\end{thm}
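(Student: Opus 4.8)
The plan is to specialize Lemma~\ref{lem:trace} and Theorem~\ref{thm:trace} to $n=4$, $\mathcal{B}=M_k(\cc)$, $\phi=tr$, and $f=A(z)$ (homogeneous of degree $1$), and then to evaluate the resulting holomorphic coefficient explicitly. By Lemma~\ref{lem:trace}, $tr(\omega_A^3)=3\sum_{j=1}^4 I_{\bar j}\,dz_{\bar j}$; the relation (3.6) and the computation in the proof of Theorem~\ref{thm:trace} collapse this to $tr(\omega_A^3)=\frac{3}{z_4}I_{\bar 4}\,s(z)$, where $s(z)=\sum_{j=1}^4(-1)^j z_j\,dz_{\bar j}$ is precisely the form displayed in the statement and the overall constant $3$ is the factor $(n-1)$ coming from Lemma~\ref{lem:trace}. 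Here $I_{\bar 4}=tr(W_1W_2W_3)-tr(W_1W_3W_2)$ with $W_s=A^{-1}(z)A_s$, so the whole problem reduces to identifying the scalar $\frac{1}{z_4}I_{\bar 4}$ as $\frac{p(z)}{\det^2 A(z)}$ for a homogeneous polynomial $p$ of degree $2k-4$.

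First I would clear denominators. Writing $A^{-1}(z)=\frac{1}{\det A(z)}\,\text{adj}(A(z))$ and substituting into $I_{\bar 4}$ gives $I_{\bar 4}=\frac{N(z)}{\det^3 A(z)}$, where
\[
N(z)=tr\big(\text{adj}(A)A_1\,\text{adj}(A)A_2\,\text{adj}(A)A_3\big)-tr\big(\text{adj}(A)A_1\,\text{adj}(A)A_3\,\text{adj}(A)A_2\big)
\]
is an honest polynomial, homogeneous of degree $3(k-1)=3k-3$ (each entry of $\text{adj}(A)$ is a $(k-1)\times(k-1)$ minor, hence homogeneous of degree $k-1$). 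The target identity $\frac{1}{z_4}I_{\bar 4}=\frac{p}{\det^2 A}$ is then equivalent to the divisibility $z_4\,\det A(z)\mid N(z)$, with $p=N/(z_4\det A)$ automatically homogeneous of degree $(3k-3)-1-k=2k-4$.

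The two divisibilities are handled separately. Divisibility by $z_4$ is already contained in the proof of Theorem~\ref{thm:trace}: relation (3.6) shows $I_{\bar 4}$ vanishes on $\{z_4=0\}$, and since $\det A$ is not divisible by $z_4$ (generically), this forces $z_4\mid N$. The substantive point is divisibility by $\det A$, where the idea is to test $N$ on the hypersurface $\{\det A(z)=0\}$. At a generic point of this hypersurface $A(z)$ has rank exactly $k-1$, so $\text{adj}(A(z))$ has rank one and may be written $\text{adj}(A(z))=uv^{t}$. Substituting, each of the two traces in $N$ telescopes into the fully symmetric product $(v^{t}A_1u)(v^{t}A_2u)(v^{t}A_3u)$; the two terms are therefore equal and $N$ vanishes identically on $\{\det A=0\}$. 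Hence $\det A\mid N$, and combining with $z_4\mid N$ yields the factorization $N=z_4\,\det A\cdot p$, which gives $tr(\omega_A^3)=\frac{3N}{z_4\det^3 A}\,s(z)=\frac{3p(z)}{\det^2 A(z)}\,s(z)$ as claimed.

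The step I expect to be the main obstacle is pinning down the exact power of $\det A$ dividing $N$. The rank-one argument shows $N$ vanishes to first order along the reduced hypersurface, which yields divisibility by $\det A$ cleanly when $\det A(z)$ is square-free, but if $\det A$ carries repeated irreducible factors one must check that $N$ vanishes to matching higher order along those components. I would address this either by refining the rank-one computation to track the order of vanishing, or, more robustly, by replacing the Nullstellensatz argument with the algebraic identity $\text{adj}(A)A=\det(A)I$ and using it, together with the antisymmetry in $A_2,A_3$, to extract a factor of $\det A$ from $N$ directly; this avoids any reducedness hypothesis and simultaneously confirms that $p$ is a genuine polynomial rather than merely a function holomorphic off $\{\det A=0\}$. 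The bookkeeping of the signs in $s(z)$ and of the factor $3=(n-1)$ inherited from Lemma~\ref{lem:trace} is then routine.
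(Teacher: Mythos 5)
Your reduction is exactly the paper's: Lemma 3.1 plus the relations (3.6) from the proof of Theorem 3.2 collapse $tr(\omega_A^3)$ to $\frac{3}{z_4}I_{\bar 4}\,s(z)$, and the adjugate decomposition $A^{-1}(z)=\det(A(z))^{-1}B$ with $B=A(z)^{\#}$ turns the problem into the divisibility $z_4\det A(z)\mid N(z)$, with $N=\mathrm{tr}(BA_1BA_2BA_3)-\mathrm{tr}(BA_1BA_3BA_2)$ homogeneous of degree $3k-3$; your degree count $3k-3-1-k=2k-4$ is correct, as is the $z_4$-divisibility via (3.6). The divergence, and the gap, is in how you get $\det A\mid N$. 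Your rank-one computation (at a point where $A(z)$ has rank $k-1$, write $\mathrm{adj}(A)=uv^{t}$, so both traces collapse to the symmetric scalar $(v^{t}A_1u)(v^{t}A_2u)(v^{t}A_3u)$) does show that $N$ vanishes on all of $\{\det A=0\}$ — on components where the rank drops further the adjugate is zero, so those are covered too. But, as you yourself flag, the Nullstellensatz then only yields divisibility of $N$ by the \emph{radical} of $\det A$, and $\det A(z)$ can genuinely be non-reduced (e.g.\ each $A_i$ block diagonal with two equal blocks gives $\det A=(\det A')^2$). So the argument as written proves the theorem only for square-free $\det A$, and the proposed fallback — ``use $\mathrm{adj}(A)A=\det(A)I$ together with the antisymmetry in $A_2,A_3$'' — is too vague to carry the remaining case; that identity alone does not produce the factorization.

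The tool that actually closes this unconditionally is the paper's Lemma 3.4, the Jacobi (Desnanot--Jacobi) minor identity for the adjugate, $B^{\#}_{i,j}B^{\#}_{p,q}-B^{\#}_{i,q}B^{\#}_{p,j}=\det(B)\det(C_{(i,j),(p,q)})$. Expanding $N$ over elementary matrices reduces everything to differences $B_{s,p}B_{q,k}B_{l,r}-B_{l,p}B_{q,r}B_{s,k}$, and two applications of the identity extract a factor of $\det A(z)$ term by term, yielding an explicit polynomial formula for $-z_4p(z)$ with no reducedness hypothesis. If you prefer to keep the geometric route, you would have to show that $N$ vanishes to order $m$ along any irreducible component occurring with multiplicity $m$ in $\det A$, which does not follow from the rank-one picture; the algebraic identity is the clean way through. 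Everything else in your write-up (the factor $3=n-1$, the signs in $s(z)$) matches the paper.
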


Before we prove this theorem we need the following lemma from Jacobi (cf. [DC]).

\begin{lem} [Jacobi]
Let $B$ be an $k \times k$ matrix and $B^{\#}$ be its adjugate matrix such that $B^{\#}B=(detB)I$. Then $$B^{\#}_{i,j}B^{\#}_{p,q}-B^{\#}_{i,q}B^{\#}_{p,j}=det(B) \cdot det(C_{(i,j),(p,q)})$$
where $C_{(i,j),(p,q)}$ is the matrix obtained from $B$ by removing rows $i,p$ and columns $j,q.$ 
\end{lem}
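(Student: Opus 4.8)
The asserted identity is the $2\times 2$ case of the classical theorem of Jacobi on the complementary minors of the adjugate (equivalently, a Sylvester-type determinant identity), so the plan is to give a bordered-determinant proof for invertible $B$ and then remove the invertibility hypothesis by a density argument. The only genuinely delicate point will be matching signs and index labels against the stated form.

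First I would observe that both sides of the claimed equation are polynomials in the $k^2$ entries of $B$. Hence it suffices to verify the identity on the Zariski-dense open set where $\det B\neq 0$, since two polynomials that agree on a dense set coincide everywhere. So assume $B$ is invertible and use $B^{\#}=\det(B)\,B^{-1}$, which turns the left-hand side into
\[ B^{\#}_{i,j}B^{\#}_{p,q}-B^{\#}_{i,q}B^{\#}_{p,j}=\det(B)^2\Big((B^{-1})_{ij}(B^{-1})_{pq}-(B^{-1})_{iq}(B^{-1})_{pj}\Big), \]
that is, $\det(B)^2$ times the $2\times 2$ minor of $B^{-1}$ with rows $\{i,p\}$ and columns $\{j,q\}$.

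The heart of the argument is to evaluate that minor of $B^{-1}$ by a bordered matrix. Let $e_1,\dots,e_k$ be the standard basis of $\cc^k$, put $E=[\,e_j\ \ e_q\,]$ (a $k\times 2$ selection of columns) and $F=\binom{e_i^{\,T}}{e_p^{\,T}}$ (a $2\times k$ selection of rows), and form the $(k+2)\times(k+2)$ matrix $M=\left(\begin{smallmatrix}B & E\\ F & 0\end{smallmatrix}\right)$. Computing $\det M$ in two ways produces the identity. The Schur-complement formula with respect to the invertible block $B$ gives $\det M=\det(B)\det(-FB^{-1}E)=\det(B)\det(FB^{-1}E)$, and a direct check shows $FB^{-1}E$ is exactly the $2\times 2$ submatrix of $B^{-1}$ appearing above. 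On the other hand, expanding $\det M$ by Laplace along the two bordering rows and the two bordering columns, whose only nonzero entries are the selecting $1$'s, forces the deletion of the rows and columns picked out by the selections, collapsing $\det M$ to $\pm$ the complementary $(k-2)\times(k-2)$ minor of $B$, namely $\pm\det(C_{(i,j),(p,q)})$.

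Equating the two evaluations gives the minor of $B^{-1}$ as $\pm\det(B)^{-1}\det(C_{(i,j),(p,q)})$, and multiplying back by $\det(B)^2$ yields $\det(B)\det(C_{(i,j),(p,q)})$, as claimed. I expect the only real obstacle to be the sign and index bookkeeping: one must verify that the Laplace sign incurred in permuting the bordering rows and columns past the deleted ones is $+1$ with the stated convention for $C_{(i,j),(p,q)}$ (removing rows $i,p$ and columns $j,q$), bearing in mind that the adjugate convention $B^{\#}_{i,j}=(-1)^{i+j}\det(B\text{ with row }j\text{ and column }i\text{ deleted})$ interchanges the roles of the two index pairs. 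A quick alternative that sidesteps the bordered matrix is to use $B^{\#}_{p,q}=\partial\det(B)/\partial B_{q,p}$: differentiating once more and using $\partial(B^{-1})_{pq}/\partial B_{ji}=-(B^{-1})_{pj}(B^{-1})_{iq}$ shows the left-hand side equals $\det(B)\,\partial^2\det(B)/\partial B_{j,i}\,\partial B_{q,p}$, while the second mixed partial of the determinant is, by the Leibniz expansion, exactly the signed complementary minor $\pm\det(C_{(i,j),(p,q)})$; this again reduces the whole proof to pinning down one sign.
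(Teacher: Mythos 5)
The paper gives no proof of this lemma at all --- it is quoted from Dodgson's treatise [DC] --- so your argument is necessarily an independent one, and its skeleton is sound: reducing to invertible $B$ by polynomial density is legitimate, the substitution $B^{\#}=\det(B)B^{-1}$ is correct, and evaluating the resulting $2\times 2$ minor of $B^{-1}$ by computing $\det M$ once via the Schur complement and once via Laplace expansion along the bordering rows and columns is a standard, valid route to Jacobi's complementary-minor theorem. The alternative via second derivatives of $\det$ would work equally well.

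The step you defer, however, is not a formality, and it cannot be settled the way the printed statement requires. Carrying out the Laplace expansion along the four bordering lines yields the total sign $(-1)^{(q+k+2)+(j+k+1)+(p+k)+(i+k-1)}=(-1)^{i+j+p+q}$, and the surviving minor is the one obtained from $B$ by deleting rows $j,q$ and columns $i,p$ (the transposition you correctly anticipated from the adjugate convention). Thus what your argument proves is
\[
B^{\#}_{i,j}B^{\#}_{p,q}-B^{\#}_{i,q}B^{\#}_{p,j}=(-1)^{i+j+p+q}\det(B)\cdot\det\bigl(B\ \text{with rows}\ j,q\ \text{and columns}\ i,p\ \text{deleted}\bigr),
\]
whereas the lemma as stated asserts the sign $+1$ and deletes rows $i,p$ and columns $j,q$. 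The stated version is false in general: for $k=3$ let $B$ be the permutation matrix interchanging rows $2$ and $3$, and take $(i,p)=(1,2)$, $(j,q)=(1,3)$; then the left-hand side equals $1$ while $\det(B)\det(C_{(1,1),(2,3)})=-1$. So a complete proof must either carry the sign $(-1)^{i+j+p+q}$ and the transposed index sets, or restate the lemma accordingly. (This is harmless for the paper: in the proof of Theorem 3.3 the lemma is used only to conclude that $\det(A(z))$ divides a certain trace, for which neither the sign nor the precise identification of the complementary minor matters.)
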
  

\begin{proof} [Proof Of Theorem 3.3]
First we make the observation that $\omega_{A}$ is homogeneous of degree $0$.  Thus $tr(\omega_{A}^{3})$ is homogeneous of degree $0.$  Further $det(A(z))$ is homogeneous of degree $k$ and $s$ is homogeneous of degree $4.$  Thus if $tr(\omega_A^3)$ has the form in the theorem, then $p(z)$ must be a homogeneous of degree $2k-4.$  
What is left to prove is that $tr(\omega_A^3)$ has the desired form. By Lemma 3.1 we write
\begin{equation*}
\phi(\omega_A^3)=\displaystyle 3\sum_{1\leq i < j < k\leq 4} I_{ijk} dz_i\wedge dz_j\wedge dz_k, \tag{3.7}
\end{equation*} 
where $I_{ijk}= \phi(A(z)^{-1} A_iA(z)^{-1} A_j A(z)^{-1}A_k - A(z)^{-1}A_iA(z)^{-1}A_kA(z)^{-1}A_j)$. By the proof of Theorem 3.2,
we can say that $q(z)=-I_{123}/z_4.$  Hence we strive to simplify the term $I_{123}$.
First we make the following decomposition $A(z)^{-1}=\frac{1}{det(A(z))}\cdot B$ where $B = (A(z))^{\#}.$  
Thus we have the following result:
$$I_{123}=\frac{1}{(det(A(z)))^3}\cdot tr(BA_1BA_2BA_3-BA_1BA_3BA_2).$$
 
Hence what remains to be proven is that 
\begin{equation*}
tr(BA_1BA_2BA_3-BA_1BA_3BA_2)=p(z)\cdot det(A(z)). \tag{3.8}
\end{equation*}
First note that since the trace is linear we can work on elementary matrices. In particular if we denote $A_r=[a^r_{i,j}],\ r=1,\ 2,\ 3,$ and let $E_{i,j}$ to be the matrix with 1 in the $i$th row $j$th column and zero otherwise.  Then we have
\begin{align*}
&tr(BA_1BA_2BA_3-BA_1BA_3BA_2)\\
&=\sum_{1\leq p,q,k,l,r,s \leq n}a_{p,q}^{1}a_{k,l}^{2}a_{r,s}^{3}\cdot tr(BE_{p,q}BE_{k,l}BE_{r,s}-BE_{p,q}BE_{r,s}BE_{k,l}).
\end{align*}
Basic matrix multiplication leads to the fact that
$$tr(BE_{p,q}BE_{k,l}BE_{r,s}-BE_{p,q}BE_{r,s}BE_{k,l})=B_{s,p}B_{q,k}B_{l,r}-B_{l,p}B_{q,r}B_{s,k}.$$

Now we use Jacobi's formula repeatedly and have
\begin{align*}
&B_{s,p}B_{q,k}B_{l,r}-B_{l,p}B_{q,r}B_{s,k}\\
=&B_{s,p}B_{q,k}B_{l,r}-B_{q,r}(B_{l,k}B_{s,p}+det(A(z))\cdot det(C_{(l,p),(s,k)}))\\
=&B_{s,p}B_{q,k}B_{l,r}-B_{q,r}B_{l,k}B_{s,p}-B_{q,r}det(A(z))\cdot det(C_{(l,p),(s,k)})\\
=&B_{s,p}(B_{q,k}B_{l,r}-B_{q,r}B_{l,k})-B_{q,r}det(A(z))\cdot det(C_{(l,p),(s,k)})\\
=&B_{s,p}(det(A(z))\cdot det(C_{(q,k),(l,r)}))-B_{q,r}det(A(z))\cdot det(C_{(l,p),(s,k)})\\
=&det(A(z))(B_{s,p}\cdot det(C_{(q,k),(l,r)})-B_{q,r}\cdot det(C_{(l,p),(s,k)})).
\end{align*}
Hence, for any values $p,q,k,l,r,s$ we can factor out $det(A(z))$. This verifies (3.8).
Moreover,
\begin{align*}
&-z_4p(z)\\
&=det^2(A(z))I_{123}\\
&=\sum_{1\leq p,q,k,l,r,s \leq n}a_{p,q}^{1}a_{k,l}^{2}a_{r,s}^{3}\cdot (B_{s,p}\cdot det(C_{(q,k),(l,r)})-B_{q,r}\cdot det(C_{(l,p),(s,k)})).
\end{align*}

\end{proof}

There is a further reduction possible in the case $\mathcal{B}=M_2(\cc).$ 
\begin{exam}
When ${\mathcal B}= M_{2}(\cc)$ we have that the function $p(z)$ in Theorem 3.3 is a constant (which we denote by $C$). Specifically, let $A_j=\left[ \begin{array}{cc} 
a_1^j & a_2^j \\
a_3^j & a_4^j
\end{array} \right]$  for $1\leq j \leq 4.$
Then we can verify that \[C=-det \left( \begin{array}{cccc}
a_1^1 & a_1^2 & a_1^3 & a_1^4 \\
a_2^1 & a_2^2 & a_2^3 & a_2^4 \\
a_3^1 & a_3^2 & a_3^3 & a_3^4 \\
a_4^1 & a_4^2 & a_4^3 & a_4^4 
\end{array} \right).\]
So when $A_1, A_2, A_3, A_4$ are linearly independent, they form a basis for $M_2(\cc)$. The pencil $A(z)$ is then a biholomorphic map between
$P^c(A)$ and $GL(2,\ \cc)$. The pushforward of $tr(\omega^3_A)$ turns out to be a nontrivial element in the de Rham cohomology space $H_d^3(GL(2,\cc),\ \cc)$.
This observation is made in [Ya].

\end{exam}

The formula in Theorem 3.2 is not limited to the trace. Similar formulas exist for other cyclic co-cycles on ${\mathcal B}$. 

\begin{exam}
We denote by $S({\mathbb Z}^2)$ the space of sequences $(a_{n,m})_{n,m\in {\mathbb Z}^2}$ such that $(|n|+|m|)^q|a_{n,m}|$ is bounded for any $q\in {\mathbb N}$.
Let $\mathcal{A_{\theta}}$ be the algebra of which the generic element is a formal sum $\sum a_{n,m}U^{n}V^{m}$ where ($a_{n,m}) \in S(\mathbb{Z}^2)$ and the product is specified by the equality $UV=\lambda VU$ for some fixed $\lambda=exp(2\pi i\theta)$,\ $\theta\in (0,\ 1]$. We let the canonical trace, $tr$, on $\mathcal{A_{\theta}}$ be given by 
$$tr(\sum a_{n,m}U^{n}V^{m})=a_{0,0}.$$
There are two derivations $\delta_1$ and $\delta_2$ on $\mathcal{A_{\theta}}$ defined by $\delta_1(U^{m}V^{n})=mU^{m}V^{n}$ and $\delta_2(U^{m}V^{n})=nU^{m}V^{n}$. 
Connes ([Co]) showed that $HC^{1}(\mathcal{B}_{\theta})$ is a two dimensional vector space spanned by the cyclic cocycles $\phi_1$ and $\phi_{2}$ given by $\phi_{j}(x_0,x_1)=tr(x_0\delta_{j}(x_1))$ for all $x_0,\ x_1 \in \mathcal{A}_{\theta}$; and  $HC^{2}(\mathcal{A}_{\theta})$ is a two dimensional vector space spanned by
$\psi_1$ and $\psi_2$, where \[\psi_1(x_0,x_1,x_2)=tr(x_0x_1x_2)\] and 
\[\psi_2(x_0,x_1,x_2)=tr(x_0(\delta_1(x_1)\delta_2(x_2)-\delta_2(x_1)\delta_1(x_2))),\ \forall x_i\in \mathcal{A_{\theta}}.\]
Fix a topology on $\mathcal{A_{\theta}}$ such that $tr,\ \delta_1,\ \delta_2$ are continuous. Then $\phi_i$ and $\psi_{i},\ i=1,\ 2$ are all continuous multi-linear functionals. 

We first look at the map $\kappa$ on $HC^{1}$. Observe that we have that $\phi_{j}(I,C)=0$ and by the cyclicity $\phi_{j}(C,C)=0$, $j=1,\ 2,$ for all $C$ in $\mathcal{B}_{\theta}$.  
For simplicity we let $f$ be the linear pencil $A(z)$, where $A=(A_1,A_2,A_3)$ is a 3-tuple of elements in $\mathcal{A_{\theta}}$. Writing 
$\omega_A=W_1dz_1+W_2dz_2+W_3dz_3$, we have that
\begin{align*}
&\kappa(\phi_{j})=\phi_{j}(\omega_A,\omega_A) \\
=&2\phi_{j}(W_1,W_2)dz_1\wedge dz_2 +2\phi_{j}(W_1,W_3)dz_1\wedge dz_3 +2\phi_{j}(W_2,W_3)dz_2\wedge dz_3.
\end{align*}
Now we have that
\begin{align*}
&2z_1\phi_{j}(W_1,W_2)\\
=& 2z_1\phi_{j}(W_1,W_2)+2z_2\phi_{j}(W_2,W_2)+2z_3\phi_{j}(W_3,W_2)-2z_3\phi_{j}(W_3,W_2)\\
=& 2\phi_{j}(I,W_2)-2z_3\phi_{j}(W_3,W_2)\\
=&2z_3\phi_{j}(W_2,W_3);
\end{align*}
and
\begin{align*}
&2z_2\phi_{j}(W_1,W_2)\\
=& 2z_1\phi_{j}(W_1,W_1)+2z_2\phi_{j}(W_1,W_2)+2z_3\phi_{j}(W_1,W_3)-2z_3\phi_{j}(W_1,W_3)\\
=& 2\phi_{j}(W_1,I)-2z_3\phi_{j}(W_1,W_3)\\
=&-2z_3\phi_{j}(W_1,W_3).
\end{align*}
Hence, we can write 
$$\kappa(\phi_{j})=q_j(z)\cdot s(z),$$
where $q_j(z)=2\phi_{j}(W_1,W_2)/z_3$, $j=1,\ 2$, and $s(z)=z_1dz_2\wedge dz_3-z_2dz_1\wedge dz_3+z_3dz_1\wedge dz_2.$

The case of $\kappa$ on $HC^{2}$ is similar. Let $f$ be the linear pencil $A(z)$, where $A=(A_1,\ A_2,\ A_3,\ A_4)$ is a 4-tuple of elements in $\mathcal{A_{\theta}}$.
Clearly, Theorem 3.2 holds for $\psi_1$. For $\psi_2$, one observes that $\psi_2(I, x_1,\ x_2)=\psi_2(I, x_2,\ x_1)$ and 
$\psi_2(x_1,\ x_2,\ x_2)=0$ for every $x_1,\ x_2\in \mathcal{A_{\theta}}$. Using these observations and the cyclicity of $\psi_2$ while going through the proof of Theorem 3.2, we see that Theorem 3.2 also holds for $\psi_j(\omega_A,\omega_A,\omega_A)$.

\end{exam}

\addcontentsline{toc}{section}{\bf{Bibliography}}

\bibliographystyle{plain}	
\bibliography{mybib}

\begin{thebibliography}{Abcdef}


\bibitem[As]{} M. Anderson and J. S\"{o}strand, {\em Functional calculus for non-commuting operators with real spectra 
via an iterated cauchy formula}, J. Funct. Anal. 210 (2004), No.2, 341-375.

\bibitem[At]{} F. V. Atkinson, {\em Multiparameter eigenvalue problems}, Academic Press, New York and London, 1972.

\bibitem[BB]{} J. Ball and V. Bolotnikov, {\em Interpolation in the noncommutative Schur-Agler class}, J. Operator Theory {\bf 58} (2007), no. 1, 83–126.

\bibitem[BCY]{} J. Bannon, P. Cade and R. Yang, {\em On the spectrum of operator-valued entire functions}, to appear in Illinois J. of Mathematics.

\bibitem[BT]{} R. Bott and L. W. Tu, {\em Differential forms in algebraic topology}, Graduate Text in Mathematics 82, Springer-Verlag, New York, 1982. 

\bibitem[CGSS]{} F. Colombo, G. Gentili, I. Sabadini and D. Struppa, {\em Functional calculus in a noncommutative setting}, 
Electron. Res. Announc. Math. Sci. 14 (2007), 60-68 (electronic).

\bibitem[Co]{} A. Connes, {\em Noncommutative Geometry}, Academic Press, 1994.

\bibitem[DC]{} C. Dodgson and L. Carroll, {\em An Elementary Treatise on Determinants: With Their Application to Simultaneous Linear Equations}, Macmillan, 1867.
 
\bibitem[Do]{} R. G. Douglas, {\em Banach algebra techniques in operator theory},
Pure and Applied Mathematics, Vol. 49. Academic Press, New York-London, 1972.

\bibitem[GHV]{} J. Greene, W. Helton and V. Vinnikov, {\em Noncommutative plurisubharmonic polynomials part I: Global assumptions}, Journal of Functional Analysis
{\bf 261}, Issue 11, 1 December 2011, Pages 3390-3417.

\bibitem[HKM]{} W. Helton, I Klep and S. McCullough, {\em Proper analytic free maps}, J. Funct. Anal., {\bf 260} (5) (2011), pp. 1476-1490.

\bibitem[Je]{} B. Jefferies, {\em Spectral properties of noncommuting operators}, Lecture Notes in Math, 1843, Springer-Verlag, Berlin 2004.

\bibitem[OT]{} P. Orlik and H. Terao, {\em Arrangements of hyperplanes}, Grundlehren  der mathematischen Wissenschaften 300, Springer-Verlag Berlin Heidelberg 1992.

\bibitem[Po]{} G. Popescu, {\em Noncommutative Berezin transforms and multivariable operator model theory}, J. Funct. Anal. {\bf 254} (2008), no. 4, 1003-1057.

\bibitem[Ra]{} R. M. Range, {\em Holomorphic functions and integral representations in
several complex variables}, Graduate Texts in Mathematics, Springer-Verlag, New York, 1986.

\bibitem[Sl]{} B. D. Sleeman, {\em Multiparameter spectral theory in Hilbert space}, Research Notes in Mathematics, Vol. 22, Pitman, London 1978.

\bibitem[Ta]{} J. L. Taylor, {\em A joint spectrum for several commutative operators}, J. Functional Analysis {\bf 6} 1970 172-191.

\bibitem[Vi]{} V. Vinnikov, {\em Selfadjoint determinantal representations of real plane curves,} Math. Ann. {\bf 296} (1993), no. 3, 453-479.

\bibitem[Vo]{} H. Volkmer, {\em Multiparameter eigenvalue problems and expansion theorems}, Lecture Notes in Mathematics, Vol. {\bf 1356}, Springer-Verlag Berlin Heidelberg 1988.

\bibitem[Ya]{} R. Yang, {\em Projective spectrum in Banach algebras}, J. Topol. Anal. {\bf 1} (2009), no. 3, 289-306.
\end{thebibliography}
\end{document}